\theoremstyle{plain}
\newtheorem{thm}{Theorem}[section]
\newtheorem{cor}[thm]{Corollary}
\newtheorem{prop}[thm]{Proposition}
\newtheorem{lem}[thm]{Lemma}
\theoremstyle{definition}
\newtheorem{defn}{Definition}[section]
\newcommand{\ga}{\alpha}
\newcommand{\gb}{\beta}
\newcommand{\gfi}{\varphi}
\renewcommand{\gg}{\gamma}
\newcommand{\gl}{\lambda}
\newcommand{\gp}{\pi}
\newcommand{\gm}{\mu}
\newcommand{\gt}{\tau}
\newcommand{\gs}{\sigma}
\newcommand{\gz}{\zeta}
\newcommand{\N}{\mathbb{N}}
\newcommand{\Q}{\mathbb{Q}}
\newcommand{\F}{\mathcal{F}}
\newcommand{\FT}{\mathcal{FT}}
\newcommand{\FP}{\mathcal{FT_{\pi}}}
\newcommand{\C}{\mathcal{C}}
\newcommand{\Qx}{\mathbb{Q}[x]}
\newcommand{\nilR}{nilpotent $R$-powered group}
\newcommand{\nilQx}{nilpotent $\mathbb{Q}[x]$-powered group}
\begin{document}

\title{Separability Properties of Nilpotent $\Qx$-Powered Groups}

\author{Stephen Majewicz}
\address{Department of Mathematics\\
         CUNY-Kingsborough Community College\\
         Brooklyn, New York 11235}
\email{smajewicz@kbcc.cuny.edu}

\author{Marcos Zyman}
\address{Department of Mathematics\\
         CUNY-Borough of Manhattan Community College\\
         New York, New York 10007}
\email{mzyman@bmcc.cuny.edu}

\subjclass[2000]{Primary 20F18, 20F19, 20E26; Secondary 13C12,
13C13, 13G05}

\date{March 19, 2019}

\dedicatory{This paper is dedicated to Ben Fine and Anthony Gaglione on the occasion of their birthdays; and to Gilbert Baumslag (in memoriam)}

\keywords{nilpotent group, nilpotent $R$-group, \nilR, nilpotent $\Qx$-group, \nilQx, subgroup separability, conjugacy separability}

\begin{abstract}
In this paper we study conjugacy and subgroup separability properties in the class of \nilQx s. Many of the techniques used to study these properties in the context of ordinary nilpotent groups carry over naturally to this more general class. Among other results, we offer a generalization of a theorem due to G. Baumslag. The generalized version states that if $G$ is a finitely $\Qx$-generated $\Qx$-torsion-free \nilQx \ and $H$ is a $\Qx$-isolated subgroup of $G,$ then for any prime $\gp \in \Qx$, $\bigcap_{i = 1}^{\infty} G^{{\gp}^{i}}H = H.$
\end{abstract}

\maketitle

\section{Introduction}

Let $\C$ be a class of groups. Among the types of separability properties of groups, there are:
\begin{itemize}
\item \underline{Conjugacy $\C$-Separability} : A group $G$ is \emph{conjugacy $\C$-separable} if whenever $g$ and $h$ are not conjugate elements in $G,$ there exists $K \in \C$ and a homomorphism $\gfi$ from $G$ onto $K$ such that $\gfi(g)$ and $\gfi(h)$ are not conjugate elements in $K.$

\vspace{.1in}

\item \underline{Residually $\C$} : A group $G$ is \emph{residually $\C$} if for every non-identity element $g$ in $G,$ there exists $K \in \C$ and a homomorphism $\gfi$ from $G$ onto $K$ such that $\gfi(g)$ is not the identity.

  {Equivalently, for some non-empty index set $\Lambda,$ there exists a family $\{N_{\gl} \ | \ \gl \in \Lambda \}$ of normal subgroups of $G$ such that $G/N_{\gl} \in \C$ for all $\gl \in \Lambda$ and $\cap_{\gl \in \Lambda}N_{\gl} = 1.$}

\vspace{.1in}

\item \underline{Subgroup $\C$-Separability} : A subgroup $H$ of a group $G$ is called \emph{$\C$-separable} in $G$ if, for every element $g \in G \setminus H,$ there is a homomorphism $\gfi$ of $G$ onto $K \in \C$ such that $\gfi(g) \notin \gfi(H).$ The group $G$ is said to be \emph{separated in a group from $\C$}, and $K$ is said to \emph{separate $g$ and $H$}.
\end{itemize}

It is clear that $G$ is residually $\C$ if and only if the identity subgroup of $G$ is $\C$-separable. Furthermore, every conjugacy $\C$-separable group is residually $\C.$

Let $\F$ and $\F _{p}$ denote the classes of finite groups and finite $p$-groups ($p$ is a prime) respectively. In the literature, there are several results pertaining to these properties for nilpotent groups. For instance,

\vspace{.1in}

\noindent (i) \ In \cite{blackburn-1965}, N. Blackburn proved that every finitely generated nilpotent group is conjugacy $\F$-separable.

\vspace{.1in}

\noindent (ii) \ In \cite{ivanova-2004}, E. A. Ivanova proved that if $G$ is a finitely generated torsion-free nilpotent group which is conjugacy $\F _{p}$-separable for some prime $p,$ then $G$ must be abelian.

\vspace{.1in}

\noindent (iii) \ Let $G$ be a finitely generated torsion-free nilpotent group, and let $H$ be an isolated subgroup of $G.$ In \cite{baumslag-1971}, G. Baumslag proved that for any prime $p,$
\[
\bigcap_{i = 1}^{\infty} G^{p^{i}}H = H.
\]
Equivalently, $H$ is $\F _{p}$-separable in $G$ for any given prime $p.$

\vspace{.1in}

{If $H$ is the trivial subgroup in (iii), then we get a classical theorem of K. W. Gruenberg \cite{gruenberg-1957} which states that finitely generated torsion-free nilpotent groups are residually $\F _{p}$ for any prime $p.$}

\vspace{.1in}

\noindent (iv) \  In \cite{allenby_and_gregorac-1972}, R. B. J. T. Allenby and R. J. Gregorac prove the following: Let $G$ be a finitely generated nilpotent group and $H \leq G.$ Suppose there is an element $g \in G \setminus H$ such that $g^{n} \notin H$ for any positive integer $n.$ If $p$ is any prime, then $g$ and $H$ can be separated in a finite $p$-group.

\vspace{.25in}

Our goal in this paper is to prove results similar to those mentioned above for nilpotent $\Qx$-powered groups. The ring $\Qx$ is an example of a \emph{binomial ring}, an integral domain of characteristic zero with unity such that for any $r \in \Qx$ and positive integer $k,$
\[
\binom{r}{k} = \frac{r(r - 1)\cdots(r - k + 1)}{k!} \in \Qx.
\]
In this paper, $R$ will always be a binomial ring.

\begin{defn}\cite{hall-1969}
Let $G$ be a (locally) nilpotent group which comes equipped with an action
by $R,$
\[
G \times R \rightarrow G \hbox{ \ defined by \ } (g, \ \ga) \mapsto g^{\ga},
\]
such that for all $g \in G$ and for all $\ga \in R$, the element $g^{\ga} \in G$ is uniquely determined. Then $G$
is called a \emph{nilpotent $R$-powered group} if the following axioms hold:
\begin{itemize}
\item $g^{1} = g, \, g^{\ga}g^{\gb} = g^{\ga + \gb},$ and $(g^{\ga})^{\gb} = g^{\ga\gb}$ for all $g \in G$ and $\ga, \gb \in
R.$

\vspace{.1in}

\item $(h^{-1}gh)^{\ga} = h^{-1}g^{\ga}h$ for all $g, \ h \in G$ and for all $\ga \in R.$

\vspace{.1in}

\item (Hall-Petresco Axiom) If $\{g_{1}, \, \ldots, \, g_{n}\} \subset G$ and $\ga \in R,$ then
\[
g_{1}^{\ga} \cdots g_{n}^{\ga} = \tau_{1}(\bar{g})^{\ga}\gt_{2}(\bar{g})^{\binom{\ga}{2}} \cdots \gt_{k - 1}(\bar{g})^{\binom{\ga}{k-1}}\gt_{k}(\bar{g})^{\binom{\ga}{k}},
\]
where $k$ is the nilpotency class of the group generated by $\{g_{1}, \, g_{2}, \, \ldots, \, g_{n}\},$ $\bar{g} = (g_{1},
\, \ldots, \, g_{n}),$ and $\tau_{i}(\bar{g})$ is the $i$th \emph{Hall-Petresco word}.
\end{itemize}
\end{defn}

The Hall-Petresco words are obtained by letting $\ga = 1, \ 2, \ $ and so on. For instance,
\[
\tau_{1}(\bar{g}) = g_{1} \cdots g_{n} \hbox{ and } \gt_{2}\left(\overline{g}\right) = \left(g_{1} \cdots g_{n}\right)^{-2}g_{1}^{2} \cdots g_{n}^{2}.
\]

$R$-modules and $R$-completions of finitely generated torsion-free nilpotent groups with respect to a Mal'cev basis are examples of \nilR s \cite{hall-1969}. Categorical notions such as $R$-subgroup, $R$-homomorphism, etc. are defined in the obvious way \cite{hall-1969}.

Various theorems on residual properties of \nilR s have already been proven in some papers by the authors (\cite{majewicz-2006}, \cite{majewicz_and_zyman-2012}, \cite{majewicz_and_zyman-2012(2)}). A result on the conjugacy separability of \nilQx s can also be found in \cite{majewicz-2006}.

The definitions of separability carry over to \nilR s in a natural way. We merely state them here without giving additional background definitions. All necessary definitions, notations, and results on \nilR s will be provided in the next section.

\begin{defn}\label{ConjSep}
Let $\C$ denote a class of \nilR s. A \nilR \ $G$ is \emph{conjugacy $\C$-separable} if whenever $g$ and $h$ are not conjugate elements in $G,$ there exists $K \in \C$ and an $R$-homomorphism $\gfi$ of $G$ onto $K$ such that $\gfi(g)$ and $\gfi(h)$ are not conjugate in $K.$
\end{defn}

From this point on, $\FT$ and $\FP$ will denote the classes of \nilQx s of finite type and finite $\gp$-type for a prime $\gp \in \Qx$ respectively.

\begin{thm}\label{t:Thm1}
Let $G$ be a finitely $\Qx$-generated \nilQx. Two elements of $G$ are conjugate if and only if they are conjugate in every factor $\Qx$-group of $G$ of finite type.
\end{thm}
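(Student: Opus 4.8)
The plan is to mimic Blackburn's argument for ordinary finitely generated nilpotent groups (result (i) above), transported into the $\Qx$-powered category. One direction is trivial: if $g$ and $h$ are conjugate in $G$, their images are conjugate in every factor $\Qx$-group, in particular in every factor of finite type. So the whole content is the converse, which I would attack by contraposition: assuming $g$ and $h$ are \emph{not} conjugate in $G$, I must produce a $\Qx$-homomorphism onto a \nilQx\ of finite type in which the images remain non-conjugate. The natural engine for this is an induction on the nilpotency class of $G$, combined with a ``finitely generated'' reduction: since $G$ is finitely $\Qx$-generated and $\Qx$-torsion phenomena are controlled by the binomial-ring machinery recalled in Section 2, I expect that the relevant isolators, upper central factors, and the $\Qx$-subgroup $\langle g, h\rangle^{\Qx}$ generated by $g, h$ together with centralizers are all finitely $\Qx$-generated, and that factors by verbal-type $\Qx$-subgroups $G^{\gp^i}[G,G]$-style constructions land in $\FT$.

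Here is the order I would carry things out. First I would record the base case: a finitely $\Qx$-generated abelian \nilQx\ is itself (a quotient of) something close to finite type, and conjugacy is equality, so separating non-equal elements reduces to a residual-finiteness-type statement for finitely generated $\Qx$-modules, which should already be available from the cited work (\cite{majewicz-2006}, \cite{majewicz_and_zyman-2012}). Second, for the inductive step, let $Z = \zeta_1^{\Qx}(G)$ be the $\Qx$-center (or its $\Qx$-isolator) and pass to $\bar G = G/Z$, which has smaller class; if $\bar g$ and $\bar h$ are non-conjugate in $\bar G$, the inductive hypothesis gives a finite-type quotient of $\bar G$, hence of $G$, doing the job. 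Third — and this is the crux — I must handle the case where $\bar g$ and $\bar h$ \emph{are} conjugate in $\bar G$: after replacing $h$ by a conjugate I may assume $g h^{-1} \in Z$, and then the set of $z \in Z$ for which $g$ and $z h$ are conjugate in $G$ forms a $\Qx$-submodule $M$ of $Z$ of the form $M = \{[g,x]\,x^{\text{(twisted)}}\cdots : x \in C\}$, essentially the image of a ``$g$-twisted'' map; non-conjugacy of $g,h$ says $gh^{-1} \notin M$. One then needs: (a) $M$ is a $\Qx$-submodule (using the Hall–Petresco axiom to see that the commutator/power combinations stay inside $Z$ and are $\Qx$-linear there), and (b) $M$ is $\Qx$-closed enough — ideally $\Qx$-isolated, or at least such that $Z/M$ is residually-finite-type as a $\Qx$-module — so that $gh^{-1}$ survives in some finite-type quotient $Z/M'$ with $M \le M'$; killing $M'$ in $G$ (it is normal, even central) yields the desired $K \in \FT$.

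The main obstacle I anticipate is step (b): showing that the ``conjugating module'' $M \le Z$ is $\Qx$-isolated in $Z$, or otherwise sits inside a finite-type separating quotient. In the classical integer case this is exactly where one uses that subgroups of finitely generated nilpotent groups are finitely generated and that finitely generated abelian groups are residually finite; over $\Qx$ the analogue requires that $\Qx$-submodules of finitely $\Qx$-generated $\Qx$-modules are finitely $\Qx$-generated (Noetherian-type behavior, which should be in Section 2 since $\Qx$ is Noetherian) and that finitely generated $\Qx$-modules are residually ``finite type.'' A secondary subtlety is bookkeeping with the Hall–Petresco words when verifying that $M$ is an honest $\Qx$-submodule and that the centralizer $C = C_G(gZ)$ used to define it is itself finitely $\Qx$-generated; I would isolate this as a preliminary lemma so the induction reads cleanly. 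Modulo those module-theoretic inputs, the structure of the proof is a routine class-induction faithful to Blackburn's original.
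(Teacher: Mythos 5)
You take a genuinely different route from the paper (a Blackburn-style induction on nilpotency class rather than the paper's Baumslag-style induction on $\Qx$-torsion-free rank), and most of your reductions are sound: the passage to $h = gz_{0}$ with $z_{0} \in Z = Z(G)$, the fact that $M = \{[g,x] \mid x \in G,\ [g,x] \in Z\}$ is a $\Qx$-submodule of $Z$ (Proposition~\ref{p:CommuteCenter} gives closure under $\Qx$-powers), and the existence of a $\Qx$-submodule $M' \supseteq M$ with $Z/M'$ of finite type and $z_{0} \notin M'$ (structure of finitely generated modules over the PID $\Qx$; your worry (b) about isolating $M$ is not the real issue). The genuine gap is your final claim that ``killing $M'$ in $G$ yields the desired $K \in \FT$.'' It does not: $G/M'$ surjects onto $G/Z$, which in general has positive $\Qx$-torsion-free rank, so $G/M'$ is not of finite type --- only its central piece $Z/M'$ is. Nor does the nilpotency class drop in passing from $G$ to $G/M'$, so your induction on class gives you nothing to apply to $G/M'$. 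As written, the argument never actually produces a finite-type quotient in the hard case, which is the entire content of the theorem.

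The step can be repaired by changing the induction variable to the $\Qx$-torsion-free rank: if $G$ is not of finite type, then $Z(G)$ contains a non-torsion element (Lemma~\ref{l:Lemma2}), so $Z$ has positive rank; since $Z/M'$ is torsion, $M'$ has the same rank as $Z$, whence $G/M'$ has strictly smaller rank than $G$ by Theorem~\ref{t:R-TorsionFreeRank}~(ii). One must also check --- you do not state this --- that $g$ and $h$ remain non-conjugate in $G/M'$: if $(gM')^{xM'} = hM'$, then $[g,x] \in z_{0}M' \subseteq Z$, hence $[g,x] \in M$ and $z_{0} \in MM' = M'$, a contradiction. With those two points in place, the inductive hypothesis applied to $G/M'$ finishes the proof, and the base case (rank $0$, i.e.\ finite type) is trivial. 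For comparison, the paper avoids the module $M$ altogether: following Baumslag, it fixes a non-torsion central element $a$, shows $g$ and $h$ become conjugate modulo every $H_{\gs} = gp_{\Qx}(a^{\gs})$ with $\gs$ monic (these quotients having smaller rank), and then derives a contradiction via the $\Qx$-homomorphism $l \mapsto [l,g]$ from $L = gp_{\Qx}(g, a, Y)$ into $gp_{\Qx}(a)$, using that $\Qx$ is a PID to determine the full set of conjugates of $g$ in $L$.
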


This establishes that finitely $\Qx$-generated \nilQx s are conjugacy $\FT$-separable, thus proving an analogue of (i). Our proof mimics the one given by G. Baumslag for Theorem 1.3 of \cite{baumslag-1971}.

\medskip

Our next theorem is similar to (ii).

\begin{thm}\label{t:Thm2}
Let $G$ be a finitely $\Qx$-generated $\Qx$-torsion-free \nilQx. If $G$ is conjugacy $\FP$-separable for some prime $\pi \in \Qx,$ then $G$ is an abelian $\Qx$-group.
\end{thm}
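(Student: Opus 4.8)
The plan is to argue by contradiction: assuming $G$ is not an abelian $\Qx$-group, I would produce two elements of $G$ that are not conjugate in $G$ but whose images are conjugate in every factor $\Qx$-group of $G$ of finite $\gp$-type, contradicting conjugacy $\FP$-separability. So suppose $G$ is non-abelian; being finitely $\Qx$-generated, it is nilpotent of some finite class $c \geq 2$. Since $[\gg_{c-1}(G), G] = \gg_c(G) \neq 1$, one may choose $a \in \gg_{c-1}(G)$ that is not central in $G$. For every $t \in G$ the commutator $[a, t]$ lies in $\gg_c(G)$ and is therefore central, so $t \mapsto [a, t]$ is a surjective $\Qx$-homomorphism of $G$ onto the $\Qx$-submodule $M := \{[a, t] \mid t \in G\}$ of $\gg_c(G)$ (here one uses the identity $[a, t^{\ga}] = [a, t]^{\ga}$, valid because $[a, t]$ is central). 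As $G$ is finitely $\Qx$-generated and $\Qx$-torsion-free, $\gg_c(G)$ is a finitely generated torsion-free module over the principal ideal domain $\Qx$, hence free, and so is its submodule $M$; moreover $M \neq 0$ because $a$ is not central. I would then fix $\gm \in \Qx$ with $\deg \gm \geq 1$ and $\gcd(\gm, \gp) = 1$ (possible, since $\Qx$ has infinitely many primes) and fix $b \in G$ such that $z := [a, b]$ is a member of a free $\Qx$-basis of $M$. The claim is that
\[ g := a^{\gm} \qquad \text{and} \qquad h := a^{\gm} z \]
form the desired pair.

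To see that $g$ and $h$ are not conjugate in $G$: for $t \in G$ one has $t^{-1} a^{\gm} t = (t^{-1} a t)^{\gm} = (a[a,t])^{\gm} = a^{\gm} [a,t]^{\gm}$, the last step because $a$ commutes with the central element $[a,t]$. Hence the conjugates of $a^{\gm}$ in $G$ are exactly the elements $a^{\gm} m^{\gm}$ with $m \in M$, and $h = a^{\gm} z$ is one of them precisely when $z \in \gm M$ (writing $M$ additively). But $z$ belongs to a free $\Qx$-basis of $M$ and $\gm$ is a non-unit of $\Qx$, so $z \notin \gm M$; thus $g$ and $h$ are not conjugate in $G$.

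To see that $\gfi(g)$ and $\gfi(h)$ are conjugate in $K$ for every $\Qx$-homomorphism $\gfi$ of $G$ onto a nilpotent $\Qx$-powered group $K$ of finite $\gp$-type: set $\bar a := \gfi(a)$ and $\bar z := \gfi(z) = [\bar a, \bar b]$. If $\bar a = 1$ or $\bar z = 1$ then $\gfi(g) = \gfi(h)$ and there is nothing to prove, so assume both are non-trivial. Since $K$ is a quotient of $G$ we have $\gg_{c+1}(K) = 1$, so $\gg_c(K)$ is central, and as $\bar a \in \gg_{c-1}(K)$ every commutator $[\bar a, \bar t]$ with $\bar t \in K$ is central. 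Repeating the computation above inside $K$, the conjugates of $\bar a^{\gm}$ in $K$ are the elements $\bar a^{\gm} m^{\gm}$ with $m$ in the $\Qx$-submodule $\bar M := \{[\bar a, \bar t] \mid \bar t \in K\}$ of $\gg_c(K)$, and $\bar z \in \bar M$. The key point is that every element of a nilpotent $\Qx$-powered group of finite $\gp$-type is annihilated by a power of $\gp$ — an easy induction on the nilpotency class, using a central series with cyclic $\gp$-primary factors — so $\gg_c(K)$, and hence $\bar M$, is a $\gp$-primary torsion $\Qx$-module of finite length. Since $\gcd(\gm, \gp) = 1$, a B\'ezout argument shows that multiplication by $\gm$ is injective on every $\gp$-primary module, hence bijective on the finite-length module $\bar M$; therefore $\gm \bar M = \bar M \ni \bar z$, and so $\gfi(h) = \bar a^{\gm} \bar z$ is conjugate to $\gfi(g) = \bar a^{\gm}$ in $K$. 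This contradicts conjugacy $\FP$-separability, and therefore $G$ is an abelian $\Qx$-group.

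The step I expect to be the crux is the choice of the exponent $\gm$, which must satisfy two competing demands at once: it has to be a non-unit of $\Qx$ so that $z$ stays outside $\gm M$ and the pair remains non-conjugate in $G$, and it has to be coprime to $\gp$ so that multiplication by $\gm$ is invertible on every $\gp$-primary $\Qx$-module and the pair becomes conjugate in every quotient of finite $\gp$-type. Establishing that $t \mapsto [a, t]$ is genuinely $\Qx$-linear with free image — which is precisely why $a$ is drawn from $\gg_{c-1}(G)$, so that all of its commutators with elements of $G$ are central — is the other technical point. The ambient facts about finitely generated modules over the PID $\Qx$ and about the $\gp$-torsion structure of groups of finite $\gp$-type are expected to be in place from Section~2.
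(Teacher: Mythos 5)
Your proof is correct, and its skeleton is the same as the paper's (which follows Ivanova): exhibit a pair $w^{\gl},\, w^{\gl}z$ with $z$ a nontrivial central commutator, show the pair is not conjugate in $G$ by a divisibility/root obstruction, and obtain conjugacy in every finite $\gp$-type quotient from a B\'ezout identity between $\gl$ and a power of $\gp$ annihilating the image of $z$. The implementations differ in one step. The paper takes $z_{2} \in \gz_{2}G \setminus Z(G)$, a prime $\ga \neq \gp$, and uses Lemma~\ref{l:NoInfiniteRoots} to choose $n$ so that $z_{1} = [z_{2}, h]$ has no $\ga^{n}$th root in $Z(G)$; you instead take $a \in \gg_{c-1}G$ non-central, note that $t \mapsto [a, t]$ has image a nonzero finitely generated torsion-free, hence free, $\Qx$-submodule $M$, pick $z$ a basis element of $M$ and any non-unit $\gm$ coprime to $\gp$, and certify non-conjugacy by identifying the conjugacy class of $a^{\gm}$ with the coset $a^{\gm}(\gm M)$, from which $z \notin \gm M$ is immediate. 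Your variant bypasses Lemma~\ref{l:NoInfiniteRoots} entirely, letting freeness over the PID do the work, at the cost of having to choose $z$ rather than using whatever central commutator is handed to you; the paper's choice of a tailored exponent $\ga^{n}$ makes the module-theoretic bookkeeping unnecessary. Two minor remarks: the fact that every element of a group of finite $\gp$-type is killed by a power of $\gp$ is just Definition~\ref{d:finite0-type}(2), so no induction is needed; and you can avoid the finite-length argument for $\bar{M}$ (which would require Theorem~\ref{t:MAX} applied to $K$) by arguing directly, as the paper does: from $\gb\gm + \gamma\gp^{k} = 1$ one gets $\bar{z} = (\bar{z}^{\gb})^{\gm}$ with $\bar{z}^{\gb} = [\bar{a}, \bar{b}^{\gb}] \in \bar{M}$ by Proposition~\ref{p:CommuteCenter}, which already exhibits the required conjugating element $\bar{b}^{\gb}$.
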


The proof mimics the one given by E. A. Ivanova in \cite{ivanova-2004}.

\begin{defn}\label{d:R-SubgroupSeparability}
Let $G$ be a \nilR, and let $\C$ be a class of \nilR s. An $R$-subgroup $H$ of $G$ is called \emph{$\C$-separable} in $G$ if, for every element $g \in G \setminus H,$ there is an $R$-homomorphism $\gfi$ of $G$ onto $K \in \C$ such that $\gfi(g) \notin \gfi(H).$ The group $G$ is said to be \emph{separated in a group from $\C$}, and $K$ is said to \emph{separate $g$ and $H$}.
\end{defn}

We prove the following analogue of (iii):

\begin{thm}\label{t:Thm3}
Let $G$ be a finitely $\Qx$-generated $\Qx$-torsion-free \nilQx, and let $H$ be a $\Qx$-isolated subgroup of $G.$ For any prime $\gp \in \Qx$,
\[
\bigcap_{i = 1}^{\infty} G^{{\gp}^{i}}H = H.
\]
Equivalently, $H$ is $\FP$-separable for any given prime $\gp \in \Qx.$
\end{thm}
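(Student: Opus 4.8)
The plan is to imitate Baumslag's original argument for (iii), but carried out inside the category of nilpotent $\Qx$-powered groups, using $\Qx$-isolators and factoring through the $\Qx$-completion machinery wherever Baumslag used ordinary isolators and localization. Fix a prime $\gp \in \Qx$. The inclusion $H \subseteq \bigcap_{i=1}^{\infty} G^{\gp^{i}}H$ is trivial, so the content is the reverse inclusion; equivalently, given $g \in G \setminus H$ we must produce an $R$-homomorphism onto a group in $\FP$ separating $g$ from $H$. First I would reduce to the case that $H$ is the trivial $\Qx$-subgroup, or at least to a situation where $g$ has infinite order modulo $H$ in the appropriate sense: since $H$ is $\Qx$-isolated, either $g \notin \sqrt[\Qx]{H}$ in $G$ (so $g$ generates no $\Qx$-power lying in $H$), or the quotient behaves like the residually-$\FP$ situation of Gruenberg's theorem. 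Concretely, I would pass to $\bar G = G/H^{G}$ when $H$ is $\Qx$-normal, or more generally work with the $\Qx$-coset space, and observe that $\Qx$-isolatedness of $H$ makes $\bar G$ (or the relevant section) $\Qx$-torsion-free — this is the analogue of the elementary fact that $G/H$ is torsion-free when $H$ is isolated in a torsion-free nilpotent group.

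The second step is the structural induction on the nilpotency class $c$ of $G$. For $c = 1$ the group $G$ is an abelian $\Qx$-group, finitely $\Qx$-generated and $\Qx$-torsion-free, hence a free $\Qx$-module of finite rank; here $G^{\gp^{i}}$ literally means $\gp^{i}G$ in module notation, $\bigcap_i (\gp^{i}G + H) = H$ follows because $\Qx/(\gp)$-analysis or the Krull intersection theorem applies to the finitely generated module $G/H$ over the Noetherian ring $\Qx$ localized at $(\gp)$, and $H$ being $\Qx$-isolated forces $G/H$ to be torsion-free so $\bigcap_i \gp^{i}(G/H) = 0$. For the inductive step, let $Z = \sqrt[\Qx]{Z(G)}$ (the $\Qx$-isolator of the centre, which is a central $\Qx$-subgroup), set $G' = G/Z$ of class $c - 1$, and let $H' $ be the image of $H$, or rather the $\Qx$-isolator of that image; one checks $H'$ is $\Qx$-isolated in $G'$ so the inductive hypothesis gives separability there, and then one must handle the kernel $Z$ separately, exactly as Gruenberg and Baumslag do: separate $g$ from $H$ either through $G'$ (if the image of $g$ is already outside $H'$) or through a finite $\gp$-quotient of the abelian-$\Qx$ section $ZH/H$.

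The third step, and the one I expect to be the main obstacle, is controlling the intersection $\bigcap_i G^{\gp^{i}}H$ set-theoretically while combining the two sources of $\FP$-quotients above into a single $\FP$-quotient — i.e. verifying that "residually $\FP$ of the central section" plus "$\FP$-separable in the quotient of smaller class" really does give "$\FP$-separable in $G$". In the ordinary group setting this is a routine diagram chase, but in the $\Qx$-powered category one must check that the pullback/fibre-product construction stays within the category (the relevant extensions of $\Qx$-powered groups by $\Qx$-powered groups are again $\Qx$-powered, using the Hall–Petresco axiom to define the $\Qx$-action on the extension), that finitely $\Qx$-generated is inherited by the subgroups and quotients used, and crucially that the finite-$\gp$-type quotients one builds are genuinely of finite $\gp$-type (bounded $\gp$-power exponent, finite rank) rather than merely finite. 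A secondary technical point is that $G^{\gp^{i}}$ here is the $\Qx$-subgroup $\Qx$-generated by $\gp^{i}$th $\Qx$-powers, which need not coincide on the nose with $\{g^{\gp^{i}} : g \in G\}$; one needs the usual commutator-collection estimate (again via Hall–Petresco) to show that in a finitely $\Qx$-generated nilpotent $\Qx$-group $G^{\gp^{i}}$ is nonetheless "small" modulo any fixed $\Qx$-isolated subgroup, which is what makes the intersection collapse. Once these categorical bookkeeping facts are in place, Baumslag's induction transcribes essentially verbatim, and the equivalence with $\FP$-separability of $H$ is then just unravelling Definition \ref{d:R-SubgroupSeparability}: $g \notin \bigcap_i G^{\gp^{i}}H$ exactly says some $G/G^{\gp^{i}}H \in \FP$ separates $g$ from $H$.
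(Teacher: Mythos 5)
Your base case ($c=1$) is essentially the paper's (there $G/H$ splits off and one quotes $\bigcap_i K^{\gp^i}=1$; your Krull-type argument for the free module $G/H$ is the same point), and your instinct to induct on nilpotency class via the centre is also how the paper begins. But there is a genuine gap in the inductive step, at precisely the point you flag as ``the main obstacle'' and then defer to a routine diagram chase. The hard case is when $g\notin H$ but $g\in I(HZ,\,G)$, the $\Qx$-isolator of $HZ(G)$: then the image of $g$ in $G/Z$ cannot be separated from (the isolator of) the image of $H$, so the smaller-class quotient gives nothing, and your fallback --- a finite $\gp$-quotient of the section $ZH/H$ --- is not a quotient of $G$ at all. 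Note that $H$ need not be normal in $G$ (so your step-one reduction to $G/H^{G}$ is also unavailable: $H^{G}$ may strictly contain $H$ and may swallow $g$), and although $H\unlhd HZ$, an $R$-homomorphism defined on $HZ$ does not extend to $G$; nor does a fibre product of the two partial quotients yield a quotient of $G$ separating $g$ from $H$, since the separation fails in each factor individually. This case is where the real work lies and it cannot be dispatched by categorical bookkeeping.

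The paper closes it with two ingredients absent from your outline. First, a structural lemma (Lemma~\ref{l:Lemma3}): if $H$ is $\Qx$-isolated, $Y\leq_{\Qx} Z(G)$ and $I(HY,\,G)=G$, then $H\unlhd_{\Qx}G$; this is proved by induction on $\Qx$-torsion-free rank after showing $Z(H)\leq_{\Qx} Z(G)$. Applied inside $I=I(HZ,\,G)$ it gives $H\unlhd_{\Qx}I$ with $I/H$ $\Qx$-torsion-free. Second, the class induction is used not to separate $g$ directly but to prove $\bigcap_i G^{\gp^i}I=I$, so that $L:=\bigcap_i G^{\gp^i}H\leq_{\Qx}I$; then, since $\Q\subseteq\Qx$, every element of $G^{\gp^i}$ is an honest $\gp^i$th power (Theorem~\ref{t:ProductOfPowers}) --- so your worry about $G^{\gp^i}$ versus the set of $\gp^i$th powers evaporates and no collection estimate is needed --- and any $\ell\in L$ makes $\ell H$ a $\gp^i$th power in $I/H$ for every $i$, which by Lemma~\ref{l:NoInfiniteRoots} forces $\ell\in H$. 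Without an analogue of Lemma~\ref{l:Lemma3} (normality of the isolated subgroup inside the isolator) and the no-infinite-$\gp$-roots argument, your plan does not go through.
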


Another result on $\FP$-separability is:

\begin{thm}\label{t:Thm4}
Let $G$ be a finitely $\Qx$-generated \nilQx \ and $H \leq_{\Qx} G.$ Suppose there is an element $g \in G \setminus H$ such that $g^{\gl} \notin H$ for any $0 \neq \gl \in \Qx.$ If $\gp$ is any prime in $\Qx,$ then $g$ and $H$ can be separated in a \nilQx \ of finite $\gp$-type.
\end{thm}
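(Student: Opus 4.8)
The plan is to reduce the statement to Theorem~\ref{t:Thm3} by replacing $H$ with its $\Qx$-isolator and then factoring out the $\Qx$-torsion. Let $\overline{H}$ denote the $\Qx$-isolator of $H$ in $G$, that is, the set of all $x \in G$ with $x^{\gl} \in H$ for some nonzero $\gl \in \Qx$. Invoking the results on isolators assembled in Section~2, $\overline{H}$ is a $\Qx$-isolated $\Qx$-subgroup of $G$ containing $H$; here the fact that $\overline{H}$ is a $\Qx$-subgroup rests on nilpotency, while $\Qx$-isolation of $\overline{H}$ uses that $\Qx$ is an integral domain. The hypothesis on $g$ says precisely that $g \notin \overline{H}$. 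Furthermore, if $T$ denotes the $\Qx$-torsion $\Qx$-subgroup of $G$, then $T \subseteq \overline{H}$, because each $t \in T$ satisfies $t^{\gl} = 1 \in H$ for a suitable nonzero $\gl \in \Qx$; in particular $g \notin T$.

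Next I would pass to $\widetilde{G} = G/T$, which is again finitely $\Qx$-generated and is now $\Qx$-torsion-free, so that Theorem~\ref{t:Thm3} becomes applicable to it. Since $T \subseteq \overline{H}$ and $\overline{H}$ is $\Qx$-isolated in $G$, the image $\overline{H}/T$ is a $\Qx$-isolated $\Qx$-subgroup of $\widetilde{G}$: if $(xT)^{\gl} \in \overline{H}/T$ with $\gl \neq 0$, then $x^{\gl} \in \overline{H}T = \overline{H}$, hence $x \in \overline{H}$ and $xT \in \overline{H}/T$. Moreover $gT \notin \overline{H}/T$ because $g \notin \overline{H} \supseteq T$. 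Applying Theorem~\ref{t:Thm3} to $\widetilde{G}$ and its $\Qx$-isolated subgroup $\overline{H}/T$ yields an $R$-homomorphism $\gy$ of $\widetilde{G}$ onto some $K \in \FP$ with $\gy(gT) \notin \gy(\overline{H}/T)$. Composing $\gy$ with the canonical projection $G \to \widetilde{G}$ produces an $R$-homomorphism $\gfi$ of $G$ onto $K$; since $HT/T \subseteq \overline{H}/T$ we get $\gfi(H) \subseteq \gy(\overline{H}/T)$, whereas $\gfi(g) = \gy(gT) \notin \gy(\overline{H}/T)$, so $\gfi(g) \notin \gfi(H)$. Thus $\gfi$ separates $g$ and $H$ in a \nilQx \ of finite $\gp$-type.

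The one genuinely new input is Theorem~\ref{t:Thm3}; the rest is bookkeeping with the standard apparatus recalled in Section~2, and I anticipate no real obstacle. The points deserving a moment's attention are that the $\Qx$-isolator of a $\Qx$-subgroup is itself a $\Qx$-subgroup (so that it is a legitimate object to feed into Theorem~\ref{t:Thm3}), that this isolator swallows the $\Qx$-torsion subgroup, and that $\Qx$-isolation survives the passage to $G/T$ once $T$ is contained in the isolator. Alternatively one could transcribe the original argument of Allenby and Gregorac more directly, but the reduction above is shorter and isolates the new ingredient cleanly.
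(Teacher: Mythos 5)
Your proposal is correct, and it shares the paper's essential skeleton (pass to the $\Qx$-isolator of $H$, quotient by the $\Qx$-torsion subgroup $T=\gt(G)$, and feed the result into Theorem~\ref{t:Thm3}), but it handles the torsion reduction by a genuinely different and slicker device. The paper first proves a ``without loss of generality $G$ is $\Qx$-torsion-free'' step: it verifies that the hypothesis $g^{\gl}\notin H$ for all $0\neq\gl$ descends to $G/T$ relative to $HT/T$, and this costs a Hall--Petresco computation involving the exponent $\gs$ of $T$ (producing $g^{\ga\gb}=h^{\gb}t'$ and $g^{\ga\gm}=h^{\gm}t'$ with $\gm=\gb+\gs$, whence $g^{\ga(\gb-\gm)}\in H$, a contradiction); only afterwards does it apply Theorem~\ref{t:Thm3} to $I(H,G)$. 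You instead pass to $I(H,G)$ \emph{before} quotienting, and then the torsion step becomes trivial bookkeeping: $T\subseteq I(H,G)$ because $1\in H$, so $I(H,G)/T$ is a $\Qx$-isolated subgroup of $G/T$ and $gT\notin I(H,G)/T$ follows at once from $g\notin I(H,G)$ and $I(H,G)T=I(H,G)$, with no exponent or Hall--Petresco argument needed. Your route is shorter and isolates Theorem~\ref{t:Thm3} as the only substantive input; what the paper's computation buys is the slightly stronger intermediate fact that the full hypothesis of the theorem (no nonzero $\Qx$-power of $gT$ lies in $HT/T$) is inherited by $G/T$, i.e.\ a reusable reduction to the torsion-free case, but for proving the stated theorem your argument suffices. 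The only implicit point in both treatments is that the equivalence claimed in Theorem~\ref{t:Thm3} (the intersection condition versus $\FP$-separability, via $G/G^{\gp^{j}}$ being of finite $\gp$-type) is taken as given, and you use it at the same level of detail as the paper does.
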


The proof is similar to the one given for Theorem 1 (a) in \cite{allenby_and_gregorac-1972}.

\section{Preliminaries}

In this section, we discuss some terminology and results on \nilR s which will be used in this paper. We remind the reader that $R$ will always be a binomial ring. We will usually denote a group identity element, a multiplicative identity ring element, and the trivial subgroup of a group by $1.$

\vspace{.15in}

 Let $G$ is a \nilR. If $[g, \ h] = 1$ for $g, \ h \in G$, the Hall-Petresco axiom implies that $(gh)^{\ga} = g^{\ga}h^{\ga}$ for any $\ga \in R.$
In particular, every abelian $R$-group is an $R$-module.

\begin{prop} \label{p:CommuteCenter}
Suppose $G$ is a \nilR. Let $g, \ h \in G$ such that $[g, \ h] \in Z(G).$ Then $[g, \ h]^{\ga} = [g, \ h^{\ga}]$ for all $\ga \in R$.
\end{prop}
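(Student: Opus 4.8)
The plan is to reduce the statement to the classical commutator identity that holds in any nilpotent group when $[g,h]$ is central, and then bootstrap from integer exponents to arbitrary ring exponents using the Hall--Petresco axiom. First I would record the commutator collection identity: in any group, if $c = [g,h]$ is central, then $[g, h^{n}] = c^{n}$ for every positive integer $n$ (this is the standard $[g, h^{n}] = [g,h]^{n}$ when $[g,h]$ commutes with both $g$ and $h$, proved by an easy induction on $n$), and likewise $[g, h^{-1}] = c^{-1}$, so $[g, h^{n}] = [g,h]^{n}$ for all $n \in \Z$. Since this identity already lives inside the ordinary group $G$, it requires no $R$-structure.

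Next I would upgrade the integer exponent $n$ to an arbitrary $\ga \in R$. Fix $g, h \in G$ with $c = [g,h] \in Z(G)$, and consider the subgroup $\langle g, h \rangle$. Because $[g,h]$ is central in $G$, it is central in $\langle g, h \rangle$, so $\langle g, h \rangle$ has nilpotency class at most $2$. The cleanest route is to apply the Hall--Petresco axiom to the pair $\bar{g} = (g^{-1}, \, g h)$, or alternatively to observe directly that in a class-$2$ group $(gh)^{\ga} = g^{\ga} h^{\ga} [h,g]^{\binom{\ga}{2}}$ for all $\ga \in R$; I would derive this from the Hall--Petresco axiom, since $\gt_{1}(\bar g) = gh$, $\gt_{2}(\bar g) = [h,g]$ up to the usual conventions, and the higher words are trivial in class $2$. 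Then from $g^{\ga} h^{\ga} = (gh)^{\ga} [h,g]^{-\binom{\ga}{2}}$ one computes $[g^{\ga}, h^{\ga}] = c^{{\ga}^{2}}$ by comparing with the analogous expansion, but the identity I actually want, $[g, h^{\ga}] = c^{\ga}$, is gotten more directly: apply the class-$2$ expansion to the product $h^{\ga} = (h \cdots h)$ viewed through conjugation by $g$, i.e. use $g^{-1} h^{\ga} g = (g^{-1} h g)^{\ga} = (h c)^{\ga}$, and since $c$ is central and commutes with $h$, $(hc)^{\ga} = h^{\ga} c^{\ga}$. Therefore $g^{-1} h^{\ga} g = h^{\ga} c^{\ga}$, which is exactly $[g, h^{\ga}] = h^{-\ga} g^{-1} h^{\ga} g = c^{\ga}$, i.e. $[g,h]^{\ga} = [g, h^{\ga}]$.

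So the backbone of the argument is the single line $g^{-1} h^{\ga} g = (g^{-1} h g)^{\ga} = (hc)^{\ga} = h^{\ga} c^{\ga}$, using the second axiom ($R$-equivariance of conjugation), the first axiom, and the fact that a product of two commuting elements raised to $\ga$ splits (which follows from the Hall--Petresco axiom, as noted in the paragraph preceding Proposition~\ref{p:CommuteCenter}). The main obstacle, and the only place that needs care, is justifying $(hc)^{\ga} = h^{\ga} c^{\ga}$: this needs $[h, c] = 1$, which holds because $c = [g,h] \in Z(G)$. Everything else is a direct appeal to the axioms, so I expect the write-up to be short.

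\begin{proof}
Let $g, h \in G$ and put $c = [g,h]$, so that $c \in Z(G)$ by hypothesis; in particular $[h, c] = 1$. Using the second axiom in the definition of a \nilR \ together with the first axiom, we have, for any $\ga \in R$,
\[
g^{-1} h^{\ga} g = (g^{-1} h g)^{\ga} = (h [h,g]^{-1})^{\ga} = (h c)^{\ga},
\]
where we used $g^{-1} h g = h (h^{-1} g^{-1} h g) = h [h,g]^{-1}$ and $[h,g]^{-1} = [g,h] = c$. Since $h$ and $c$ commute, the Hall--Petresco axiom gives $(hc)^{\ga} = h^{\ga} c^{\ga}$ (as noted in the remark preceding this proposition). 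Hence
\[
g^{-1} h^{\ga} g = h^{\ga} c^{\ga},
\]
which rearranges to
\[
[g, h^{\ga}] = h^{-\ga} g^{-1} h^{\ga} g = h^{-\ga}(h^{\ga} c^{\ga}) = c^{\ga} = [g,h]^{\ga}.
\]
This holds for all $\ga \in R$, completing the proof.
\end{proof}
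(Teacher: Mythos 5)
Your proof is correct and is essentially the paper's argument rearranged: both rest on the conjugation axiom together with the remark preceding the proposition that powers of commuting elements split, the paper applying the splitting to $[g,\,h] = (g^{-1}h^{-1}g)\cdot h$ after checking that $g^{-1}h^{-1}g$ commutes with $h$, while you apply it to $(hc)^{\ga}$ with $c$ central, which is the same computation read in a different order. One caution: your write-up implicitly uses the convention $[a,\,b] = b^{-1}a^{-1}ba$ (so that $g^{-1}hg = hc$ and $[g,\,h^{\ga}] = h^{-\ga}g^{-1}h^{\ga}g$), whereas the paper's computations use $[a,\,b] = a^{-1}b^{-1}ab$, under which $g^{-1}hg = h[g,\,h]^{-1} = hc^{-1}$ and $[g,\,h^{\ga}] = g^{-1}h^{-\ga}gh^{\ga}$; the final identity is insensitive to this choice, but you should fix one convention and make the intermediate lines match it.
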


\begin{proof}
Since $[g, \ h] \in Z(G),$ we have
\begin{eqnarray*}
[g^{-1}h^{-1}g, \ h] & = & g^{-1}hgh^{-1}[g, \ h]\\
                     & = & g^{-1}hg[g, \ h]h^{-1}\\
                     & = & 1.
\end{eqnarray*}
By the previous paragraph, $(g^{-1}h^{-1}gh)^{\ga} = (g^{-1}h^{-1}g)^{\ga}h^{\ga}$ for any $\ga \in R.$ Since $(g^{-1}h^{-1}g)^{\ga} = g^{-1}h^{-\ga}g,$ the result follows.
\end{proof}

Let $G$ be a \nilR. If $H$ is an $R$-subgroup or a normal $R$-subgroup of $G,$ then we write $H \leq_{R} G \hbox{ and } H \unlhd_{R} G$ respectively. We say that $H$ is \emph{$R$-generated by $X = \{x_{1}, \, \ldots, \, x_{j}\} \subseteq G$} if
\[
H = \bigcap_{X \subset H_{i}\leq_{R}G} \{H_{i}\}.
\]
In this case, we write $H = gp_{R}(x_{1}, \, \ldots, \, x_{j}).$

If $G$ is a \nilR \ and $N \unlhd_{R} G,$ then the $R$-action on $G$
induces an $R$-action on $G/N,$
\[
(gN)^{\ga} = g^{\ga}N \hbox{ for all } gN \in G/N \hbox{ and } \ga \in R,
\]
which turns $G/N$ into a \nilR.

It is not hard to show using the Hall-Petresco Axiom that if $G$ is a \nilR \ with $H \leq_{R} G$ and $N \unlhd_{R} G,$ then $HN \leq_{R} G$ and $HN = gp_{R}(H, \ N).$

The usual isomorphism theorems carry over to nilpotent $R$-powered groups in the obvious way. If $G_{1}$ and $G_{2}$ are \nilR s, then the kernel of an $R$-homomorphism $\gfi : G_{1} \rightarrow G_{2},$ abbreviated as $ker \ \gfi,$ is a normal $R$-subgroup of $G_{1}.$ If $G_{1}$ and $G_{2}$ happen to be $R$-isomorphic, then we write $G_{1} \cong_{R} G_{2}.$

The upper and lower central subgroups of a \nilR \ $G$ are $R$-subgroups of $G.$ We denote them by $\gz_{i}G$ and $\gg_{i}G$ respectively. The center of $G$ will be written as $Z(G).$

\begin{thm}\cite{kargapolov and et al-1969}\label{t:MAX}
If $R$ is noetherian, then every $R$-subgroup of a finitely $R$-generated \nilR \ is finitely $R$-generated.
\end{thm}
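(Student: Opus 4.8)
The plan is to prove this by induction on the nilpotency class $c$ of $G$, transporting to the $R$-powered setting the classical argument that finitely generated nilpotent groups satisfy the maximal condition on subgroups. For the base case $c = 1$, the group $G$ is an abelian $R$-group, hence an $R$-module, and being finitely $R$-generated coincides with being finitely generated as an $R$-module (here $gp_{R}(x_{1}, \ldots, x_{n}) = Rx_{1} + \cdots + Rx_{n}$); since $R$ is noetherian, $G$ is a noetherian $R$-module, so every $R$-subgroup of $G$, being a submodule, is finitely generated.

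For the inductive step, write $G = gp_{R}(x_{1}, \ldots, x_{n})$, suppose $c \geq 2$, and let $H \leq_{R} G$. Put $N = \gg_{c}G$, the last nontrivial term of the lower central series, which is a central $R$-subgroup, so that $G/N$ is finitely $R$-generated of class $c - 1$. I first invoke the auxiliary fact that $N$ is finitely $R$-generated; more generally, that each $\gg_{k}G$ is $R$-generated by the basic commutators of weight at least $k$ in $x_{1}, \ldots, x_{n}$, of which there are only finitely many since $G$ has class $c$. Granting this, $N$ is a central, finitely $R$-generated $R$-group, hence a finitely generated $R$-module, hence noetherian by the base case, so $H \cap N \leq_{R} N$ is finitely $R$-generated. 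By the induction hypothesis applied to $G/N$, the $R$-subgroup $HN/N$ of $G/N$ is finitely $R$-generated; choose $h_{1}, \ldots, h_{m} \in H$ whose images $R$-generate $HN/N$. A routine lifting argument then gives $H = gp_{R}(h_{1}, \ldots, h_{m})(H \cap N)$: if $h \in H$, its image in $G/N$ lies in $gp_{R}(h_{1}N, \ldots, h_{m}N) = gp_{R}(h_{1}, \ldots, h_{m})N/N$, so $k^{-1}h \in N$ for some $k \in gp_{R}(h_{1}, \ldots, h_{m}) \subseteq H$, whence $k^{-1}h \in H \cap N$ and $h \in gp_{R}(h_{1}, \ldots, h_{m})(H \cap N)$. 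Adjoining a finite $R$-generating set of $H \cap N$ to $\{h_{1}, \ldots, h_{m}\}$ produces a finite $R$-generating set of $H$, completing the induction.

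The main obstacle is the auxiliary fact that $\gg_{c}G$ (and each $\gg_{k}G$) is finitely $R$-generated: one cannot simply invoke the theorem being proved, since $\gg_{c}G$ is itself an $R$-subgroup of $G$, so this must be established independently by a collection-style argument adapted to $R$-powered groups. The idea is to show that the commutator map $\gg_{c-1}G/\gg_{c}G \times G/\gg_{2}G \to \gg_{c}G$ is biadditive and compatible with the $R$-action — Proposition~\ref{p:CommuteCenter} already supplies $[g, h]^{\ga} = [g, h^{\ga}]$ when $[g, h]$ is central, and the symmetric statement in the first variable follows in the same way — so that $\gg_{c}G = [\gg_{c-1}G, G]$ is $R$-spanned by the finitely many commutators $[u, x_{i}]$ with $u$ a basic commutator of weight $c - 1$; an induction on class using the Hall–Petresco axiom then handles the intermediate terms $\gg_{k}G$. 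Beyond this point, everything needed — the isomorphism theorems for $R$-groups and the fact that $HN \leq_{R} G$ with $HN = gp_{R}(H, N)$ — has already been recorded in this section, and the classical proof goes through essentially verbatim.
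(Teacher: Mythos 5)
The paper itself offers no proof of this statement---it is quoted from \cite{kargapolov and et al-1969}---so there is no internal argument to compare yours against; what you have written is the standard proof of the max condition transported to the $R$-powered setting, and it is essentially sound. The base case is fine: in an abelian $R$-group the $R$-subgroups are exactly the $R$-submodules, so finite $R$-generation is finite generation as a module over the noetherian ring $R$. The inductive step (take $N = \gg_{c}G$, which the paper records is an $R$-subgroup and which is central; handle $H \cap N$ by noetherianness of $N$; handle $HN/N$ by the class-$(c-1)$ case; lift, using the recorded fact that $HN = gp_{R}(H, N) \leq_{R} G$) is the classical argument and goes through verbatim. The one point carrying real content is, as you correctly isolate, that $\gg_{c}G$ is finitely $R$-generated, and your sketch does work: since $\gg_{c}G$ is central, Proposition~\ref{p:CommuteCenter} (together with its mirror image in the first variable, obtained by inverting) makes the commutator pairing $\gg_{c-1}G/\gg_{c}G \times G/\gg_{2}G \rightarrow \gg_{c}G$ $R$-bilinear, and an induction on class---apply the auxiliary statement to $G/\gg_{c}G$ to obtain finitely many elements $u_{1}, \ldots, u_{s}$ that $R$-generate $\gg_{c-1}G$ modulo $\gg_{c}G$---yields $\gg_{c}G = gp_{R}([u_{j}, x_{i}])$. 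Note that you do not need basic commutators or the intermediate terms $\gg_{k}G$ for $1 < k < c$ at all: only the last lower central term is used at each level of the main induction, so that portion of your sketch can simply be dropped, which removes the only genuinely vague step. Alternatively, the auxiliary fact is available in the literature on nilpotent $R$-powered groups (P.~Hall's Edmonton notes \cite{hall-1969}, Warfield \cite{warfield}), so citing it there would also be legitimate.
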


\begin{defn}\cite{warfield}\label{d:RTorsion}
If $G$ is a \nilR, then $g \in G$ is an \emph{$R$-torsion element} if $g^{\ga} = 1$ for some $0 \neq \ga \in R.$
If every element of $G$ is $R$-torsion, then $G$ is called an \emph{$R$-torsion group}. We say that $G$ is \emph{$R$-torsion-free} if the only $R$-torsion element of $G$ is $1.$
\end{defn}

\begin{defn}\cite{majewicz-2006}\label{d:finite0-type}
Let $\gp \in R$ be a prime. A finitely $R$-generated \nilR \ $G$ is of
\begin{enumerate}
\item \emph{finite type} if it is an $R$-torsion group.
\item \emph{finite $\gp$-type} if for each $g \in G,$ there exists a positive integer $k$ such that $g^{\gp^{k}} = 1.$
\end{enumerate}
\end{defn}

The next theorem and lemma are proven by the authors in \cite{majewicz_and_zyman-2009}.

\begin{thm}\label{t:ShortExactSequence}
Let $R$ be noetherian and consider the short exact sequence
\[
1 \rightarrow H \rightarrow G \rightarrow G/H \rightarrow 1
\]
in the category of \nilR s. Then $G$ is of finite type if and only if $H$ and $G/H$ are both of finite type.
\end{thm}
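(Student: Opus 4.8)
The plan is to prove the two implications separately; in each case the work splits into verifying the two conditions in the definition of ``finite type,'' namely being finitely $R$-generated and being an $R$-torsion group, and both verifications run parallel to the corresponding elementary facts about ordinary nilpotent groups.

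I would first do the direction $G$ of finite type $\Rightarrow$ $H$ and $G/H$ of finite type. For the quotient, the image of a finite $R$-generating set of $G$ is a finite $R$-generating set of $G/H$, and if $g^{\ga}=1$ with $0\neq\ga\in R$ then $(gH)^{\ga}=g^{\ga}H=H$, so $G/H$ is $R$-torsion; hence $G/H$ is of finite type. For $H$, every element of $H$ already lies in the $R$-torsion group $G$ and so is $R$-torsion, which leaves only the claim that $H$ is finitely $R$-generated; this is precisely where the hypothesis that $R$ is noetherian enters, since Theorem~\ref{t:MAX} then guarantees that the $R$-subgroup $H$ of the finitely $R$-generated group $G$ is itself finitely $R$-generated.

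For the converse, assume $H$ and $G/H$ are of finite type. To see that $G$ is $R$-torsion, fix $g\in G$: since $G/H$ is $R$-torsion there is $0\neq\ga\in R$ with $g^{\ga}\in H$, and since $H$ is $R$-torsion there is $0\neq\gb\in R$ with $g^{\ga\gb}=(g^{\ga})^{\gb}=1$; here one uses that $R$, being a binomial ring, is an integral domain, so $\ga\gb\neq 0$. To see that $G$ is finitely $R$-generated, pick an $R$-generating set $h_{1},\ldots,h_{m}$ of $H$ and elements $g_{1},\ldots,g_{n}\in G$ whose cosets $R$-generate $G/H$, and set $K=gp_{R}(h_{1},\ldots,h_{m},g_{1},\ldots,g_{n})\leq_{R}G$. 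The image of $K$ under the natural projection $G\to G/H$ is the $R$-subgroup $R$-generated by the $g_{i}H$, hence all of $G/H$, so $KH=G$; and since $h_{1},\ldots,h_{m}\in K$ gives $H\leq K$, we conclude $G=KH=K$, so $G$ is finitely $R$-generated and therefore of finite type.

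I do not expect a serious obstacle: once the definitions are unwound, everything reduces to standard facts. The one place that needs a little care is the finite-generation step in the converse, which rests on the behaviour of the operator $gp_{R}(\cdot)$ under $R$-homomorphisms --- namely that an $R$-homomorphism carries the $R$-subgroup $R$-generated by a set onto the $R$-subgroup $R$-generated by the image, equivalently the correspondence between $R$-subgroups of $G/H$ and $R$-subgroups of $G$ containing $H$ --- both of which are instances of the isomorphism theorems for \nilR s recalled in this section.
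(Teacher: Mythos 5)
Your proof is correct: both directions reduce cleanly to the two clauses of Definition~\ref{d:finite0-type}, with Theorem~\ref{t:MAX} (and hence the noetherian hypothesis) supplying finite $R$-generation of $H$, the integral-domain property of the binomial ring $R$ giving $\ga\gb\neq 0$ in the torsion step, and the correspondence of $R$-subgroups under the projection $G\to G/H$ handling finite generation of $G$ in the converse. Note that this paper does not prove Theorem~\ref{t:ShortExactSequence} itself but only cites \cite{majewicz_and_zyman-2009}, so there is no in-text proof to compare with; your argument is the natural one and fills that role correctly.
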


\begin{lem}\label{l:Lemma1}
Let $R$ be a PID. If $G$ is a finitely $R$-generated \nilR \ and $Z(G)$ is of finite type, then $G$ is of finite type.
\end{lem}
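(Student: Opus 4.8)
The plan is to argue by induction on the nilpotency class $c$ of $G$, using Theorem~\ref{t:ShortExactSequence} to reduce the class. The base case $c \le 1$ is immediate: if $G$ is abelian then $G = Z(G)$, which is assumed to be of finite type. For the inductive step, the natural thing to do is to quotient out the center. Set $\bar{G} = G/Z(G)$. By Theorem~\ref{t:MAX} (with $R$ a PID, hence noetherian), $\bar{G}$ is finitely $R$-generated, and it has nilpotency class $c - 1$. So if I can show $Z(\bar{G})$ is of finite type, then by induction $\bar{G}$ is of finite type; since $Z(G)$ is of finite type by hypothesis, Theorem~\ref{t:ShortExactSequence} applied to $1 \to Z(G) \to G \to \bar{G} \to 1$ yields that $G$ is of finite type.

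So the crux is: \textbf{if $Z(G)$ is of finite type, then $Z(G/Z(G))$ is of finite type.} This is where the real work lies, and it is the step I expect to be the main obstacle, because passing to the quotient can in principle enlarge the center uncontrollably. The standard tactic in ordinary nilpotent group theory is to use the commutator pairing: if $gZ(G) \in Z(\bar{G})$ then $[g, G] \subseteq Z(G)$, and the map $h \mapsto [g, h]$ is a homomorphism from $G$ into $Z(G)$ (since the commutators land in the center). Here I would want the $R$-powered analogue: by Proposition~\ref{p:CommuteCenter}, $[g, h^{\ga}] = [g,h]^{\ga}$ whenever $[g,h] \in Z(G)$, so $h \mapsto [g,h]$ is in fact an $R$-homomorphism $G \to Z(G)$. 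Now suppose $\bar{g} = gZ(G)$ is an $R$-torsion element of $\bar{G}$, say $\bar{g}^{\,\ga} = 1$, i.e. $g^{\ga} \in Z(G)$ for some $0 \neq \ga \in R$. I want to conclude that $\bar{g}$ is... wait, that is not quite enough on its own; I actually need every element of $Z(\bar G)$ to be $R$-torsion, which is the same as saying $Z(\bar G)$ is an $R$-torsion group — but first I should check $Z(\bar G)$ is finitely $R$-generated (again from Theorem~\ref{t:MAX}, as an $R$-subgroup of the finitely $R$-generated $\bar G$), so "$R$-torsion group" is the same as "finite type".

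To finish the crucial step: let $gZ(G) \in Z(\bar G)$, so $[g,G] \subseteq Z(G)$ and $\theta_g : h \mapsto [g,h]$ is an $R$-homomorphism $G \to Z(G)$. Since $Z(G)$ is of finite type and $G$ is finitely $R$-generated, the image $\theta_g(G) = [g,G]$ is a finitely $R$-generated subgroup of the $R$-torsion group $Z(G)$, hence $[g,G]$ is ``bounded'' in the sense that there is a single $0 \neq \beta \in R$ with $[g,G]^{\beta} = 1$; in particular $[g^{\beta}, h] = [g,h^{\beta}] = 1$ for all $h \in G$ (using Proposition~\ref{p:CommuteCenter} again in the form $[g,h]^{\beta}=[g,h^{\beta}]$ and the fact that $[g^\beta,h]$ differs from $[g,h]^\beta$ by elements of $[g,G]\subseteq Z(G)$ — this is the one computation I would need to do carefully, expanding $[g^\beta,h]$ via the commutator identities in a group whose relevant commutators are central). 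Thus $g^{\beta} \in Z(G)$, which is of finite type, so $g^{\beta\gamma} = 1$ for some $0 \neq \gamma \in R$, whence $(gZ(G))^{\beta\gamma} = 1$ and $gZ(G)$ is $R$-torsion. Therefore $Z(\bar G)$ is an $R$-torsion group, and being finitely $R$-generated it is of finite type. This closes the induction.

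The one genuinely delicate point, as flagged, is controlling $[g^{\beta}, h]$ from $[g, G]$ being annihilated by $\beta$: in a non-abelian $R$-group the relation between $[g^\beta,h]$ and $[g,h]^\beta$ involves higher Hall–Petresco terms, but all of these lie in $[g,G] \subseteq Z(G)$ and are themselves annihilated by an appropriate power of $\beta$, so after possibly replacing $\beta$ by a suitable multiple (still nonzero, since $R$ is an integral domain) one gets $[g^\beta, h] = 1$ for all $h$. I would spell this out using the identity $g^\beta = $ product telescoping through the central commutators, or alternatively induct on the class once more internally. Everything else is a routine transfer of the classical argument, with Theorem~\ref{t:MAX}, Theorem~\ref{t:ShortExactSequence}, and Proposition~\ref{p:CommuteCenter} doing the heavy lifting in place of their ordinary-group counterparts.
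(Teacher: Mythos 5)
The paper itself never proves Lemma~\ref{l:Lemma1}: it is quoted, together with Theorem~\ref{t:ShortExactSequence}, from \cite{majewicz_and_zyman-2009}, so there is no internal proof to compare yours against. Judged on its own, your argument is correct and complete given the results stated in Section 2: induct on the nilpotency class, show that $Z(G/Z(G))$ is an $R$-torsion group via the commutator map into $Z(G)$, apply the induction hypothesis to $G/Z(G)$, and finish with Theorem~\ref{t:ShortExactSequence} (legitimate here, since the direction you need --- an extension of a finite-type group by a finite-type group is of finite type --- is elementary and does not depend on this lemma, so no circularity arises). Two small remarks. First, the step you flag as delicate is actually immediate: $gZ(G)\in Z(G/Z(G))$ forces $[g,h]\in Z(G)$ for \emph{every} $h\in G$, so Proposition~\ref{p:CommuteCenter} applied to the pair $(h,g)$ gives $[g^{\beta},h]=[g,h]^{\beta}=1$ outright; no Hall--Petresco correction terms enter and there is no need to replace $\beta$ by a multiple. (Indeed you may simply take $\beta$ to be a nonzero annihilator of all of $Z(G)$, which exists because $Z(G)$ is a finitely $R$-generated torsion module over the PID $R$; this makes the homomorphism $\theta_g$ dispensable.) Second, a trivial misattribution: finite $R$-generation of the quotient $G/Z(G)$ needs no appeal to Theorem~\ref{t:MAX}, since images of $R$-generators $R$-generate; Theorem~\ref{t:MAX} is what you correctly invoke for $Z(G/Z(G))$ being finitely $R$-generated, which is what upgrades ``$R$-torsion'' to ``finite type'' before applying the induction hypothesis.
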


The next lemma will be needed later.

\begin{lem}\label{l:Lemma2}
Let $R$ be a PID. If $G$ is a finitely $R$-generated \nilR \ and is not of finite type, then $Z(G)$ has an element which is not $R$-torsion.
\end{lem}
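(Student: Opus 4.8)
The plan is to deduce this from Lemma~\ref{l:Lemma1}, of which it is essentially the contrapositive, together with the fact that $R$-subgroups of finitely $R$-generated \nilR s are finitely $R$-generated. First, since $R$ is a PID it is noetherian, so Theorem~\ref{t:MAX} applies and shows that the $R$-subgroup $Z(G) \leq_{R} G$ of the finitely $R$-generated group $G$ is itself finitely $R$-generated.

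Next I would argue by contradiction: suppose every element of $Z(G)$ is $R$-torsion, that is, $Z(G)$ is an $R$-torsion group. Being also finitely $R$-generated, $Z(G)$ is then of finite type in the sense of Definition~\ref{d:finite0-type}. Lemma~\ref{l:Lemma1} now forces $G$ to be of finite type, contrary to hypothesis. Hence $Z(G)$ cannot consist entirely of $R$-torsion elements, which is precisely the assertion.

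I do not expect a genuine obstacle here once Lemma~\ref{l:Lemma1} is available; the points needing attention are only bookkeeping ones. One must use that a PID is noetherian in order to invoke Theorem~\ref{t:MAX}, and one must observe that ``$Z(G)$ is of finite type'' splits into the two conditions ``finitely $R$-generated'' (supplied by Theorem~\ref{t:MAX}) and ``$R$-torsion group'' (supplied by the hypothesis for contradiction); note also that the phrase ``$G$ is not of finite type'' is meaningful only because $G$ is assumed finitely $R$-generated, so it merely says $G$ has a non-$R$-torsion element. One could instead give a self-contained proof by induction on the nilpotency class of $G$, passing to a suitable $R$-torsion-free quotient and using Theorem~\ref{t:ShortExactSequence} and Proposition~\ref{p:CommuteCenter}, but this essentially repeats the work behind Lemma~\ref{l:Lemma1} and is more delicate at the step of pulling a non-$R$-torsion central element of the quotient back to $G$, so I would prefer the short argument above.
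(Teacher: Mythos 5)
Your argument is correct and is essentially identical to the paper's proof: both assume $Z(G)$ is $R$-torsion, use Theorem~\ref{t:MAX} (via $R$ being noetherian as a PID) to get that $Z(G)$ is finitely $R$-generated and hence of finite type, and then invoke Lemma~\ref{l:Lemma1} to contradict the hypothesis that $G$ is not of finite type. Nothing further is needed.
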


\begin{proof}
Assume, on the contrary, that $Z(G)$ is an $R$-torsion group. Since $G$ is finitely $R$-generated, so is $Z(G)$ by Theorem~\ref{t:MAX}. Thus, $Z(G)$ is of finite type. By Lemma~\ref{l:Lemma1}, $G$ is also of finite type.
\end{proof}

If $G$ is a \nilR \ and $\gb \in R,$ then we define
\[
G^{\gb} = gp_{R}(g^{\gb} \ | \ g \in G).
\]
It is clear that $G^{\gb} \unlhd_{R} G.$

\begin{thm}\cite{majewicz_and_zyman-2010}\label{t:ProductOfPowers}
If $R$ contains $\Q$ and $G$ is a \nilR, then every element of $G^{\gb}$ is a $\gb$th power of an element of $G$ for any $\gb \in R.$
\end{thm}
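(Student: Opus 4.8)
The plan is to induct on the nilpotency class $c$ of $G$. In the base case $c = 1$ the group $G$ is abelian, hence an $R$-module (as noted in the Preliminaries), so $G^{\gb}$ consists literally of the elements $g^{\gb}$ and there is nothing to prove. For the inductive step, suppose the result holds for all \nilR s of class less than $c$, and let $G$ have class $c$. Fix a central $R$-subgroup $Z \leq Z(G)$ — concretely one can take $Z = \gg_{c}G$, the last nontrivial term of the lower central series, which is a central $R$-subgroup. Then $G/Z$ has class $< c$, so by the inductive hypothesis every element of $(G/Z)^{\gb} = G^{\gb}Z/Z$ is a $\gb$th power modulo $Z$. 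The key structural fact to establish first is therefore $G^{\gb} = \{\, g^{\gb} : g \in G\,\}$; equivalently, that the set of $\gb$th powers is already closed under the group operation and under $R$-powering, so that no extra $R$-subgroup-generation is needed.

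The heart of the argument is thus: given $g_{1}, g_{2} \in G$, find $g_{3} \in G$ with $g_{1}^{\gb} g_{2}^{\gb} = g_{3}^{\gb}$. Here I would apply the Hall-Petresco axiom to the pair $(g_{1}, g_{2})$:
\[
g_{1}^{\gb} g_{2}^{\gb} = \gt_{1}(\bar{g})^{\gb}\,\gt_{2}(\bar{g})^{\binom{\gb}{2}} \cdots \gt_{c}(\bar{g})^{\binom{\gb}{c}},
\]
where $\gt_{1}(\bar g) = g_{1}g_{2}$ and $\gt_{i}(\bar g) \in \gg_{i}G$ for $i \geq 2$. Because $R \supseteq \Q$, each binomial coefficient $\binom{\gb}{i}$ equals $\gb \cdot \lambda_{i}$ for a suitable $\lambda_{i} \in R$ (namely $\lambda_{i} = \binom{\gb}{i}/\gb = (\gb-1)\cdots(\gb-i+1)/i!$, which lies in $R$ since $R$ contains $\Q$ and is a binomial ring), so each factor $\gt_{i}(\bar g)^{\binom{\gb}{i}} = (\gt_{i}(\bar g)^{\lambda_{i}})^{\gb}$ is itself a $\gb$th power of an element of $\gg_{i}G$. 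The problem therefore reduces to showing that a product of $\gb$th powers coming from successively deeper terms of the lower central series can be recombined into a single $\gb$th power — which is exactly the kind of statement amenable to a downward induction on the commutator weight, using that the deepest term $\gg_{c}G$ is central (so there $\gb$th powers multiply honestly: $(ab)^{\gb} = a^{\gb}b^{\gb}$ by the first remark of the Preliminaries) and applying Proposition~\ref{p:CommuteCenter} to absorb central correction terms.

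Concretely I would set up an inner induction: prove by downward induction on $j$ (from $j = c$ down to $j = 1$) that any product $u_{j}^{\gb} u_{j+1}^{\gb} \cdots u_{c}^{\gb}$ with $u_{i} \in \gg_{i}G$ equals $v_{j}^{\gb}$ for some $v_{j} \in \gg_{j}G$; combining two such $\gb$th powers introduces, via Hall-Petresco, only commutators of strictly higher weight, which by the inductive hypothesis on $j$ can again be folded in. Running this down to $j = 1$ handles closure under multiplication; closure under $R$-powering, $(g^{\gb})^{\ga} = (g^{\ga})^{\gb}$, is immediate from the axiom $(g^{\ga})^{\gb} = g^{\ga\gb} = (g^{\gb})^{\ga}$. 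The main obstacle I anticipate is bookkeeping in this Hall-Petresco recombination — ensuring the error terms really do climb strictly in the lower central series and that the needed divided binomial coefficients stay in $R$ — rather than any conceptual difficulty; the hypothesis $\Q \subseteq R$ is precisely what makes the division by $\gb$ (and by the factorials hidden in the $\gt_{i}$ exponents) legitimate.
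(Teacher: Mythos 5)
The paper offers no proof of Theorem~\ref{t:ProductOfPowers}: it is imported from \cite{majewicz_and_zyman-2010}, so there is no internal proof to compare your argument against. That said, your proposal follows the standard route for statements of this kind, and its central mechanism is correct: since $\Q \subseteq R$, the element $\lambda_{i} = \binom{\gb}{i}/\gb = (\gb - 1)\cdots(\gb - i + 1)/i!$ lies in $R$, so each Hall--Petresco correction $\gt_{i}(\bar{g})^{\binom{\gb}{i}} = \bigl(\gt_{i}(\bar{g})^{\lambda_{i}}\bigr)^{\gb}$ is again a $\gb$th power of an element lying strictly deeper in the lower central series; and once the set of $\gb$th powers is shown closed under products, inverses, and $R$-powers (the latter two from $(g^{\gb})^{\ga} = (g^{\ga})^{\gb}$), it is an $R$-subgroup containing the $R$-generators of $G^{\gb}$, hence equals $G^{\gb}$. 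Two points need tightening. First, your inner induction hypothesis (one factor from each of $\gg_{j}G, \dots, \gg_{c}G$) is too rigid to absorb the corrections it generates; state it instead as: every finite product of $\gb$th powers of elements of $\gg_{j}G$ is a $\gb$th power of an element of $\gg_{j}G$, proved by downward induction on $j$. Second, a single appeal to level $j+1$ does not close the inductive step: after folding the corrections into one factor $w^{\gb}$ with $w \in \gg_{j+1}G$, you must still recombine $b^{\gb}w^{\gb}$ with $b \in \gg_{j}G$, and this recombination produces new corrections; these, however, lie in $[\gg_{j}G, \gg_{j+1}G] \leq \gg_{2j+1}G$, strictly deeper than before, so iterating the absorption terminates at $\gg_{c+1}G = 1$. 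With these repairs the argument is complete, and your outer induction on the nilpotency class (passing to $G/\gg_{c}G$) becomes unnecessary and can simply be dropped.
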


The notion of the exponent of a group has been generalized for the category of \nilR s and will be used in the proof of Theorem 1.4.

\begin{defn}\cite{majewicz_and_zyman-2009}\label{d:ExponentOfNilR}
Let $R$ be a PID, and let $G$ be a \nilR \, of finite type. We say that $G$ has \emph{exponent} $\pi_{1} \pi_{2} \cdots \pi_{n},$ where the $\pi_{i}$'s are primes (not necessarily distinct) in $R,$ if
\[
G^{\pi_{1} \pi_{2} \cdots \pi_{n}} = 1 \hbox{ but } G^{\pi_{1} \pi_{2} \cdots \widehat{\pi_{i}} \cdots \pi_{n}} \neq 1
\hbox{ for any } 1 \leq i \leq n.
\]
\end{defn}

Here, $\pi_{1} \pi_{2} \cdots \widehat{\pi_{i}} \cdots \pi_{n}$ stands for the omission of $\pi_{i}$ in the product $\pi_{1} \pi_{2} \cdots \pi_{n}.$

\vspace{.1in}

The $R$-isolator of a \nilR \ plays an important role in what follows.

\begin{defn}\label{d:R-isolated}
Let $G$ be a \nilR.\\
\noindent (i) \ An $R$-subgroup $H$ of $G$ is \emph{$R$-isolated} in $G$ if $g \in G$ and $g^{\gl} \in H$ for some $0 \neq \gl \in R$ imply $g \in H.$

\vspace{.1in}

\noindent (ii) \ The \emph{$R$-isolator} of $S \subseteq G,$ denoted by $I(S, \ G),$ is the intersection of all $R$-isolated subgroups of $G$ containing $S.$
\end{defn}

Clearly, $I(S, \ G)$ is an $R$-isolated subgroup of $G.$ The next theorem describes the $R$-isolator of an $R$-subgroup and can be found in \cite{warfield}.

\begin{thm}\cite{warfield}\label{t:R-Isolator}
If $G$ is a \nilR \ and $H \leq_{R} G,$ then
\[
I(H, \ G) = \{g \in G \ | \ g^{\gl} \in H \hbox{ for some } 0 \neq \gl \in R \}.
\]
\end{thm}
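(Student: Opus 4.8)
The plan is to prove the two inclusions $\widetilde{H} \subseteq I(H,G)$ and $I(H,G) \subseteq \widetilde{H}$, where $\widetilde{H}$ denotes the right-hand side $\{g \in G \mid g^{\gl} \in H \hbox{ for some } 0 \neq \gl \in R\}$. The first is immediate: $I(H,G)$ is an $R$-isolated $R$-subgroup containing $H,$ so $g^{\gl} \in H \subseteq I(H,G)$ with $\gl \neq 0$ forces $g \in I(H,G).$ For the second, since $I(H,G)$ is by definition the smallest $R$-isolated $R$-subgroup containing $H,$ it is enough to verify that $\widetilde{H}$ is one such. Three of the required properties are routine and I would dispatch them first: $H \subseteq \widetilde{H}$ trivially (take $\gl = 1$); if $g^{\gl} \in H$ and $\ga \in R$ then $(g^{\ga})^{\gl} = g^{\ga\gl} = (g^{\gl})^{\ga} \in H$ since $H$ is an $R$-subgroup, so $\widetilde{H}$ is closed under the $R$-action and, with $\ga = -1,$ under inversion; and $\widetilde{H}$ is $R$-isolated, since $g^{\gm} \in \widetilde{H}$ with $\gm \neq 0$ gives $g^{\gm\gl} = (g^{\gm})^{\gl} \in H$ for some $\gl \neq 0$ and $\gm\gl \neq 0$ because $R$ is an integral domain.

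The substantive point is that $\widetilde{H}$ is closed under multiplication, and this I would prove by induction on the nilpotency class $c$ of $G.$ If $c = 1$ then $G$ is abelian, hence an $R$-module, and $(gh)^{\gl\gm} = g^{\gl\gm}h^{\gl\gm} = (g^{\gl})^{\gm}(h^{\gm})^{\gl} \in H$ whenever $g^{\gl}, h^{\gm} \in H,$ with $\gl\gm \neq 0.$ For $c \geq 2,$ put $N = \gg_{c}G,$ an abelian $R$-subgroup contained in $Z(G),$ and pass to $\bar{G} = G/N$ of class at most $c-1,$ with $\bar{H} = HN/N \leq_{R} \bar{G}$ the image of $H.$ Since $\overline{g}^{\,\gl}, \overline{h}^{\,\gm} \in \bar{H},$ the inductive hypothesis applied in $\bar{G}$ gives $(gh)^{\nu} \in HN$ for some $0 \neq \nu \in R,$ say $(gh)^{\nu} = h_{1}n$ with $h_{1} \in H$ and $n \in N.$ It then suffices to put a nonzero $R$-power of $n$ into $H$: if $n^{\rho} \in H,$ then centrality of $n$ gives $(gh)^{\nu\rho} = (h_{1}n)^{\rho} = h_{1}^{\rho}n^{\rho} \in H,$ completing the induction.

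To achieve this I would expand a high power of $gh$ by the Hall--Petresco axiom applied to $(g,h),$
\[
g^{m}h^{m} = (gh)^{m}\,\gt_{2}(g,h)^{\binom{m}{2}} \cdots \gt_{c}(g,h)^{\binom{m}{c}}, \qquad \gt_{i}(g,h) \in \gg_{i}\bigl(gp_{R}(g,h)\bigr),
\]
choosing $m = \nu\,c!\,\gl\gm \in R,$ so that $\gl\gm$ divides each $\binom{m}{i}$ for $2 \leq i \leq c.$ With this choice $g^{m} = (g^{\gl})^{\nu c!\gm} \in H$ and $h^{m} \in H$ similarly, while $(gh)^{m} = h_{1}^{c!\gl\gm}n^{c!\gl\gm}.$ Thus, once each $\gt_{i}(g,h)^{\gl\gm}$ is known to lie in $H$ — which one would derive from $[g^{\gl}, h^{\gm}] \in H$ together with the collection identities rewriting powers of weight-$\geq 2$ commutators in $g, h$ in terms of commutators of $g^{\gl}$ and $h^{\gm},$ proved by a secondary induction on commutator weight — the displayed equation forces $(gh)^{m} \in H,$ whence $n^{c!\gl\gm} = h_{1}^{-c!\gl\gm}(gh)^{m} \in H$ with $c!\gl\gm \neq 0,$ and we take $\rho = c!\gl\gm.$

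I expect the main obstacle to be precisely this last block: controlling the Hall--Petresco correction terms $\gt_{i}(g,h)^{\binom{m}{i}}$ and the iterated commutators inside them, i.e.\ showing that suitable $R$-powers of weight-$\geq 2$ commutators in $g$ and $h$ already lie in $H.$ This is the $R$-powered analogue of the classical Mal'cev--P.~Hall theorem that the isolator of a subgroup of a nilpotent group is a subgroup; what makes it delicate is the combinatorics of the collection process rather than anything peculiar to $\Qx,$ and the binomial-ring hypothesis is exactly what lets one clear denominators — replacing $\gl\gm$ by an integer multiple thereof — so that all the binomial coefficients $\binom{m}{i}$ are well-behaved.
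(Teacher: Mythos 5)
The paper itself offers no proof of this theorem -- it is quoted from Warfield -- so your argument has to stand on its own. The routine parts are fine: $\widetilde{H} \subseteq I(H,G)$, closure of $\widetilde{H}$ under the $R$-action and inversion, its $R$-isolation, the base case $c=1$, and the reduction (by induction on class, modulo $N=\gg_{c}G$) to placing a nonzero $R$-power of the central element $n$ into $H$. The gap is exactly at the step you flag as the main obstacle, and it is not a deferred technicality but the substance of the theorem; moreover the intermediate claim you propose to prove there is false as stated. Already for $R=\Z$: let $G$ be the free nilpotent group of class $3$ on $g,h$, let $H = gp(g^{2},h^{2})$, $\gl=\gm=2$. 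An exponent-sum argument gives $H \cap \gg_{2}G = \gg_{2}(H)$, which, inside $\gg_{2}G \cong \Z^{3}$ with basis $[g,h],[g,h,g],[g,h,h]$, is the lattice spanned by $(4,2,2)$, $(0,8,0)$, $(0,0,8)$. Hence $[g,h]^{4} \notin H$ and $\gt_{2}(g,h)^{4} = [g,h]^{4}[g,h,h]^{4} \notin H$ (the second coordinate would have to be $\equiv 2 \bmod 8$ and $0$ at once). The reason your proposed derivation cannot work is that $[g^{\gl},h^{\gm}]$ agrees with $[g,h]^{\gl\gm}$ only modulo weight-$3$ commutators, and those correction terms need not lie in $H$ individually; so ``$[g^{\gl},h^{\gm}] \in H$ plus collection identities'' does not yield $\gt_{i}(g,h)^{\gl\gm} \in H$. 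Note also that if that claim were available, your inner induction and the element $n$ would be superfluous: the Hall--Petresco identity with your $m$ would give $(gh)^{m} \in H$ outright.

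What is true, and what any correct version of this route must actually prove, is that each $\gt_{i}(g,h)$ (equivalently, each weight-$\geq 2$ commutator in $g,h$) has \emph{some} nonzero $R$-power in $H$, with the required exponent growing with the weight (in the example above $\gt_{2}(g,h)^{16} \in H$ but $\gt_{2}(g,h)^{4} \notin H$), and such statements can only be obtained by working modulo successive terms of the lower central series with genuine exponent bookkeeping -- that is, by essentially carrying out the proof of the isolator theorem itself (this is how the argument in Warfield's book, which the paper cites, proceeds; the classical finite-index shortcut for ordinary nilpotent groups has no direct analogue over $\Qx$). So your skeleton is reasonable, but the crucial step is left unproved and the specific claim meant to fill it fails; as written this is not yet a proof.
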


Note that $I(1, \ G)$ is the $R$-subgroup of $G$ consisting of the $R$-torsion elements of $G.$ This is referred to as the \emph{$R$-torsion subgroup} of $G$ and denoted by $\gt(G).$ It is clear that $\gt(G) \unlhd_{R} G$ and $G/\gt(G)$ is $R$-torsion-free.

\medskip

We record a simple lemma which will be used in what follows.

\begin{lem}\label{l:R-IsolatedAndR-TFree}
Let $G$ be a \nilR \ and $H \unlhd_{R} G.$ Then $G/H$ is $R$-torsion-free if and only if $H$ is $R$-isolated in $G.$
\end{lem}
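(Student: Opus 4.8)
The plan is to prove the biconditional in Lemma~\ref{l:R-IsolatedAndR-TFree} by chasing the definitions of $R$-isolated subgroup (Definition~\ref{d:R-isolated}(i)) and $R$-torsion-free group (Definition~\ref{d:RTorsion}), using the fact recorded just before Definition~\ref{d:RTorsion} that the $R$-action on $G$ descends to an $R$-action on the quotient $G/H$ via $(gH)^{\ga} = g^{\ga}H$ whenever $H \unlhd_{R} G$. Since $H$ is normal, both directions are essentially immediate translations between a statement about $G/H$ and a statement about $G$.

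First I would prove the forward direction. Assume $G/H$ is $R$-torsion-free and take $g \in G$ with $g^{\gl} \in H$ for some $0 \neq \gl \in R$; I want $g \in H$. Passing to the quotient, $(gH)^{\gl} = g^{\gl}H = H$, the identity of $G/H$. Since $\gl \neq 0$ and $G/H$ is $R$-torsion-free, this forces $gH = H$, i.e. $g \in H$. Hence $H$ is $R$-isolated. For the converse, assume $H$ is $R$-isolated in $G$ and suppose $\bar{g} \in G/H$ is an $R$-torsion element, say $\bar{g}^{\gl} = H$ for some $0 \neq \gl \in R$. Write $\bar{g} = gH$ for some $g \in G$; then $g^{\gl}H = (gH)^{\gl} = H$, so $g^{\gl} \in H$. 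By $R$-isolation of $H$ we get $g \in H$, hence $\bar{g} = gH = H$ is trivial. Therefore the only $R$-torsion element of $G/H$ is the identity, i.e. $G/H$ is $R$-torsion-free.

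There is no real obstacle here; the only point requiring a moment's care is making sure the induced $R$-action on $G/H$ is well defined and satisfies $(gH)^{\gl} = g^{\gl}H$, which is exactly what was noted in the Preliminaries immediately after the discussion of $N \unlhd_R G$, so it may be invoked without further comment. One might also remark that $G/H$ is itself a \nilR \ under this action, so that ``$R$-torsion-free'' even makes sense for it, but that too is already established. The argument is short and symmetric in structure, so I would present both directions in a handful of lines.
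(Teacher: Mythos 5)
Your proof is correct: the paper states this as a ``simple lemma'' and omits the proof entirely, and your two-directional definition chase via the induced action $(gH)^{\ga} = g^{\ga}H$ on $G/H$ is exactly the routine argument the authors intend. Nothing is missing.
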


The theory of extraction of roots appears in several papers by the authors (see \cite{majewicz_and_zyman-2009}, \cite{majewicz_and_zyman-2010},  \cite{majewicz_and_zyman-2012}, and \cite{majewicz_and_zyman-2012(2)}).

\begin{defn}\label{d:Root}
If $G$ is a \nilR \ and $h^{\ga} = g$ for some $g, \ h \in G$ and $\ga \in R,$ then $h$ is called an \emph{$\ga$th root} of $g.$
\end{defn}

\begin{thm}\cite{warfield}\label{t:UniqueRoots}
Let $G$ be a \nilR. Then $G$ is $R$-torsion-free if and only if every $1 \neq g \in G$ has at most one $\ga$th root for every $0 \neq \ga \in R;$ that is, if $g, \ h \in G$ and $g^{\ga} = h^{\ga}$ for some $0 \neq \ga \in R,$ then $g = h.$
\end{thm}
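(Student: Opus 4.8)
The plan is to prove the two implications separately, reading the condition in the equivalent form stated in the theorem: $g^{\ga}=h^{\ga}$ with $0\neq\ga\in R$ implies $g=h$. The direction ``unique $\ga$th roots $\Rightarrow$ $G$ is $R$-torsion-free'' is immediate: if $g^{\ga}=1$ for some $0\neq\ga\in R$, then $g^{\ga}=1^{\ga}$, so $g=1$, and hence the only $R$-torsion element of $G$ is $1$.

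For the converse I would argue by induction on nilpotency class. Assume $G$ is $R$-torsion-free and $g^{\ga}=h^{\ga}$ with $0\neq\ga\in R$. Since the Hall--Petresco axiom only has content for groups of finite class, I first replace $G$ by the $R$-subgroup $gp_{R}(g,h)$, which is $R$-torsion-free (being an $R$-subgroup of an $R$-torsion-free group) and nilpotent of finite class (by the Hall--Petresco axiom, its class equals that of the ordinary subgroup generated by $g$ and $h$, which is nilpotent because $G$ is locally nilpotent), and in which $g^{\ga}=h^{\ga}$ still holds. So I may assume $G$ is nilpotent of class $c$ and induct on $c$. If $c=1$, then $G$ is abelian, hence an $R$-module, and $g^{\ga}=h^{\ga}$ gives $(gh^{-1})^{\ga}=g^{\ga}h^{-\ga}=1$; by $R$-torsion-freeness $gh^{-1}=1$, i.e. $g=h$. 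If $c\geq 2$, the key input is that $Z(G)$ is $R$-isolated in $G$, which by Lemma~\ref{l:R-IsolatedAndR-TFree} means $G/Z(G)$ is $R$-torsion-free; since $G/Z(G)$ is then an $R$-torsion-free nilpotent $R$-group of class $\leq c-1$, the inductive hypothesis applied there turns $\overline{g}^{\ga}=\overline{h}^{\ga}$ into $\overline{g}=\overline{h}$, i.e. $z:=gh^{-1}\in Z(G)$. As $z$ is central it commutes with $h$, so $g^{\ga}=(zh)^{\ga}=z^{\ga}h^{\ga}$ (the Hall--Petresco axiom gives $(xy)^{\ga}=x^{\ga}y^{\ga}$ whenever $x$ and $y$ commute); comparing with $g^{\ga}=h^{\ga}$ yields $z^{\ga}=1$, so $z=1$ by $R$-torsion-freeness and hence $g=h$, completing the induction.

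The step I expect to be the real obstacle is the one invoked above without proof: that the centre of an $R$-torsion-free nilpotent $R$-group is $R$-isolated (equivalently, that every term of its upper central series is $R$-isolated). This is the $R$-analogue of a classical fact about torsion-free nilpotent groups, and it does not come cheaply from a plain induction on class --- quotienting by $\gg_{c}G$ need not preserve $R$-torsion-freeness, while quotienting by $I(\gg_{c}G,G)$ instead is too lossy to recover the conclusion. What really drives it is the structure theory of $R$-torsion-free nilpotent $R$-groups in \cite{warfield}: a finitely $R$-generated such group carries Mal'cev-type coordinates over $R$ in which root extraction is manifestly unique, and I would simply invoke that. (In nilpotency class two one can instead argue directly from Proposition~\ref{p:CommuteCenter}, which gives $[g^{\gl},h]=[g,h]^{\gl}$, so that $[g^{\gl},h]=1$ forces $[g,h]^{\gl}=1$ and then $[g,h]=1$.) Granting the $R$-isolatedness of the centre, the reduction and induction above deliver the theorem.
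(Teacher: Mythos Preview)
The paper gives no proof here --- the theorem is simply cited from \cite{warfield} --- and then immediately uses it to derive Corollary~\ref{c:UniqueRoots}, the $R$-isolatedness of $Z(G)$. Your easy direction is fine, and your inductive scaffolding for the hard direction is the standard one; you have also correctly located the crux, namely that the induction needs $G/Z(G)$ to be $R$-torsion-free. But within this paper that statement is exactly Corollary~\ref{c:UniqueRoots}, proved \emph{from} the theorem, so you cannot invoke it, and your fallback (Mal'cev coordinates from \cite{warfield}) already yields unique $\ga$th roots outright, rendering the induction idle. As written, the argument reduces the theorem to an equivalent statement and then cites the same source for it.

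The induction can be made to carry its own weight with a small change of target. From $g^{\ga}=h^{\ga}$ in $H=gp_{R}(g,h)$ of class $c$, conjugation by $h$ gives $(g[g,h])^{\ga}=(h^{-1}gh)^{\ga}=h^{-1}g^{\ga}h=g^{\ga}$, and both $g$ and $g[g,h]$ lie in $K=gp_{R}(g,\,[g,h])$. Since $[g,h]\in\gg_{2}H$ is central modulo $\gg_{3}H$, the image of $K$ in $H/\gg_{3}H$ is abelian, so $\gg_{2}K\le\gg_{3}H$ and inductively $\gg_{m}K\le\gg_{m+1}H$ for all $m\ge 2$; hence $K$ has class at most $c-1$. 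Applying the inductive hypothesis inside the $R$-torsion-free $R$-group $K$ gives $g[g,h]=g$, so $g$ and $h$ commute, and then $(gh^{-1})^{\ga}=g^{\ga}h^{-\ga}=1$ forces $g=h$. This avoids any appeal to the isolatedness of the centre and in fact recovers Corollary~\ref{c:UniqueRoots} a posteriori.
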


\begin{cor}\label{c:UniqueRoots}
If $G$ is an $R$-torsion-free \nilR, then $Z(G)$ is $R$-isolated.
\end{cor}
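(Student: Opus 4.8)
The plan is to verify the defining condition of $R$-isolatedness (Definition~\ref{d:R-isolated}(i)) for $Z(G)$ directly, using only the second axiom of a \nilR \ together with the uniqueness of roots in $R$-torsion-free groups. So suppose $g \in G$ and $g^{\gl} \in Z(G)$ for some $0 \neq \gl \in R$; I want to conclude $g \in Z(G)$, i.e. $[g, \ h] = 1$ for every $h \in G$.

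Fix an arbitrary $h \in G$. Since $g^{\gl}$ is central, it is fixed by conjugation: $h^{-1}g^{\gl}h = g^{\gl}$. On the other hand, the axiom $(h^{-1}gh)^{\gl} = h^{-1}g^{\gl}h$ shows that $(h^{-1}gh)^{\gl} = g^{\gl}$. Thus $g$ and $h^{-1}gh$ are both $\gl$th roots of the same element with $\gl \neq 0$, so Theorem~\ref{t:UniqueRoots} forces $h^{-1}gh = g$; that is, $[g, \ h] = 1$. As $h \in G$ was arbitrary, $g \in Z(G)$, which is exactly what is required. (Alternatively, one could instead invoke Lemma~\ref{l:R-IsolatedAndR-TFree} and the description of the $R$-isolator in Theorem~\ref{t:R-Isolator}, but the argument above is self-contained.)

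There is essentially no obstacle here: the only point worth flagging is that the whole argument hinges on the compatibility of conjugation with the $R$-powering operation (the second axiom), which converts the hypothesis ``$g^{\gl}$ is conjugation-invariant'' into ``$h^{-1}gh$ is a $\gl$th root of $g^{\gl}$,'' at which point $R$-torsion-freeness does the rest. I expect the referee-level scrutiny to be limited to confirming that Theorem~\ref{t:UniqueRoots} applies with the non-identity element being $g^{\gl}$ (and noting the trivial case $g^{\gl} = 1$, which by $R$-torsion-freeness already gives $g = 1 \in Z(G)$).
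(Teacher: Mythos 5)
Your proof is correct and is essentially identical to the paper's: both use the conjugation axiom to turn $h^{-1}g^{\gl}h = g^{\gl}$ into $(h^{-1}gh)^{\gl} = g^{\gl}$ and then apply Theorem~\ref{t:UniqueRoots} to conclude $h^{-1}gh = g$. Your extra remark about the case $g^{\gl} = 1$ is unnecessary but harmless, since the cancellation form of Theorem~\ref{t:UniqueRoots} applies directly.
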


\begin{proof}
If $g^{\ga} \in Z(G)$ for some $g \in G$ and $\ga \in R,$ then $h^{-1}g^{\ga}h = g^{\ga}$ for any $h \in G.$ Thus, $(h^{-1}gh)^{\ga} = g^{\ga}.$ By Theorem~\ref{t:UniqueRoots}, $h^{-1}gh = g,$ and thus, $g \in Z(G).$
\end{proof}

Another result that we will rely on is:

\begin{lem}\label{l:NoInfiniteRoots}
Let $G$ be a non-trivial finitely $\Qx$-generated $\Qx$-torsion-free \nilQx. For any $1 \neq g \in G$ and prime $\gp \in \Qx,$ there exists $n \in \N$ such that $g$ has no $\gp^{n}$th root in $G.$
\end{lem}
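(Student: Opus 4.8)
The plan is to argue by contradiction, using the fact that a $\Qx$-torsion-free nilpotent $\Qx$-group with no finite $\pi^n$-bound on roots would force an infinitely $\pi$-divisible element into the center, which then violates finite generation. Suppose $g \neq 1$ and that for every $n \in \N$ the element $g$ has a $\pi^n$th root $g_n$ in $G$; that is, $g_n^{\pi^n} = g$. I would pass to the center: since $G$ is nilpotent, let $k$ be its $\Qx$-nilpotency class and induct on $k$. If $k = 1$ then $G$ is an abelian $\Qx$-group, hence a finitely generated $\Qx$-module, in fact a finitely generated module over $\Qx$, which is a PID; being $\Qx$-torsion-free it is a finitely generated free $\Qx$-module. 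In a finitely generated free module over the PID $\Qx$ no nonzero element is divisible by $\pi^n$ for all $n$ (by looking at coordinates and the fact that $\bigcap_n \pi^n \Qx = 0$), giving the contradiction in the base case.

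For the inductive step, consider $\bar G = G/Z(G)$, which is a finitely $\Qx$-generated $\Qx$-group (Theorem~\ref{t:MAX}) of strictly smaller nilpotency class. By Corollary~\ref{c:UniqueRoots}, $Z(G)$ is $\Qx$-isolated, so by Lemma~\ref{l:R-IsolatedAndR-TFree} the quotient $\bar G$ is $\Qx$-torsion-free. The image $\bar g$ of $g$ in $\bar G$ has a $\pi^n$th root (namely $\bar g_n$) for every $n$. By the inductive hypothesis applied to $\bar G$, if $\bar g \neq 1$ we reach a contradiction; hence $\bar g = 1$, i.e. $g \in Z(G)$. Now $g_n^{\pi^n} = g \in Z(G)$ with $G$ $\Qx$-torsion-free, so by Corollary~\ref{c:UniqueRoots} (the $\Qx$-isolatedness of $Z(G)$) each $g_n \in Z(G)$ as well. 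Thus the whole situation takes place inside the abelian $\Qx$-group $Z(G)$, which is finitely $\Qx$-generated by Theorem~\ref{t:MAX}, and we are back in the base case: $Z(G)$ is $\Qx$-torsion-free, hence a finitely generated free module over the PID $\Qx$, in which $g \neq 1$ cannot be divisible by $\pi^n$ for all $n$. This contradiction completes the proof.

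The main obstacle I expect is the reduction to the center: one must be careful that the roots $g_n$ actually land in $Z(G)$, and this is exactly where $\Qx$-torsion-freeness is essential — it guarantees uniqueness of roots (Theorem~\ref{t:UniqueRoots}) and hence the $\Qx$-isolatedness of $Z(G)$ (Corollary~\ref{c:UniqueRoots}), so that $g_n^{\pi^n} \in Z(G)$ forces $g_n \in Z(G)$. A secondary point worth stating carefully is the base case fact that a finitely generated free $\Qx$-module has trivial intersection $\bigcap_n \pi^n M = 0$; this follows coordinatewise from $\bigcap_n \pi^n \Qx = 0$, which in turn holds because $\Qx$ is a PID (indeed a UFD) in which $\pi$ is a non-unit, so $\pi^n \mid r$ for all $n$ forces $r = 0$. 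One could alternatively phrase the whole argument without induction by invoking that $G$ has a finite central $\Qx$-series with $\Qx$-torsion-free abelian factors and tracking $g$ down through the series, but the inductive formulation above is cleaner.
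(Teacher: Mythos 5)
Your proof is correct, but it takes a genuinely different route from the paper's. You induct on the nilpotency class, use the $\Qx$-isolation of $Z(G)$ (Corollary~\ref{c:UniqueRoots} with Lemma~\ref{l:R-IsolatedAndR-TFree}) to force $g$ and all of its $\gp^{n}$th roots into the center, and then finish with PID module theory: a finitely generated $\Qx$-torsion-free module is free, and $\bigcap_{n}\gp^{n}\Qx = 0$ kills coordinatewise divisibility. The paper argues in one step with no induction: assuming roots exist for all $n$, it builds the chain $g = g_{1}^{\gp}$, $g_{j} = g_{j+1}^{\gp}$, forms $H = gp_{\Qx}(g_{1}, g_{2}, \ldots)$, uses Theorem~\ref{t:MAX} to conclude $H$ is finitely $\Qx$-generated and hence $H = gp_{\Qx}(g_{k})$ for some $k$, and then notes that $g_{k}$ can have no $\gp$th root in $gp_{\Qx}(g_{k})$, since $(g_{k}^{\gm})^{\gp} = g_{k}$ and torsion-freeness force $\gm\gp = 1$, contradicting that $\gp$ is a prime. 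So the paper's argument needs only Theorem~\ref{t:MAX} and uniqueness of roots, never touching the nilpotency class or the structure theorem for modules; yours buys a more structural picture (everything collapses to the abelian, free-module case, making visible exactly where torsion-freeness and the isolation of the center enter) at the cost of importing two standard PID facts the paper does not state, plus one small misattribution: finite $\Qx$-generation of $G/Z(G)$ follows immediately from the images of the generators, not from Theorem~\ref{t:MAX}, which concerns subgroups (you do need Theorem~\ref{t:MAX}, correctly, for $Z(G)$ itself).
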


\begin{proof}
Assume, on the contrary, that $g$ has $\gp^{n}$th roots in $G$ for every $n \in \N.$ By Theorem~\ref{t:UniqueRoots}, these $\gp^{n}$th roots must be unique. Thus, we let
\[
H = gp_{\Qx}(g_{1}, \ g_{2}, \ \ldots ), \hbox{ where } g = g_{1}^{\gp} \hbox{ and } g_{j} = g_{j + 1}^{\gp} \hbox{ for all } j \in \N.
\]
By Theorem~\ref{t:MAX}, $H$ is finitely $\Qx$-generated. Consequently, there exists $k \in \N$ such that $H =  gp_{\Qx}(g_{1}, \ g_{2}, \ \ldots, \ g_{k}).$ By construction of the $g_{j}$'s, this means that $H =  gp_{\Qx}(g_{k}).$

Now, $H$ should contain all $\gp^{n}$th roots of $g.$ However, this is not the case since $g_{k},$ the $\gp^{k}$th root of $g,$ has no $\gp$th root in $H.$ For if $g_{k}$ had a $\gp$th root in $H,$ then there would exist $\gm \in \Qx$ such that $(g_{k}^{\gm})^{\gp} = g_{k};$ that is, $g_{k}^{\gm \gp - 1} = 1.$ Since $H$ is $\Qx$-torsion-free and $g_{k} \neq 1,$ $\gm \gp - 1 = 0.$ It follows that $\gp$ must be a unit. However, $\gp$ is a prime by hypothesis, a contradiction.
\end{proof}

Next we discuss some results on the $R$-torsion-free rank of a finitely $R$-generated \nilR, where $R$ is noetherian (see \cite{warfield}). First we recall the notion of torsion-free rank for a module.

\begin{defn}\label{d:RankOfModule}
Let $D$ be an integral domain with fraction field $K,$ and let $M$ be a $D$-module. The \emph{torsion-free rank} of $M$ is the $K$-dimension of the $K$-vector space $K \otimes_{D} M,$ denoted by $dim_{K}(K \otimes_{D} M).$
\end{defn}

If $M$ is a finitely generated $D$-module, then $K \otimes_{D} M$ is finite dimensional as a $K$-vector space and $dim_{K}(K \otimes_{D} M)$ is the maximal number of $D$-linearly independent elements in $M,$ i.e. the maximal rank of a free $D$-submodule of $M$  (see Theorem 6.11 in \cite{conrad2} for a proof). In the literature, torsion-free rank is usually referred to simply as rank. In particular, if $D$ is a PID and
\[
M = \underbrace{D \oplus \cdots \oplus D}_{m \, summands} \oplus T,
\]
where $T$ is the torsion part of $M,$ then the (torsion-free) rank of $M$ is $m.$

\begin{defn}\label{d:R-TorsionFreeRank}
Let $G$ be a finitely $R$-generated \nilR, where $R$ is noetherian, and let $K$ be the fraction field of $R.$ If
\[
G = G_{1} \unrhd_{R} G_{2} \unrhd_{R} \cdots \unrhd_{R} G_{n + 1} = 1
\]
is a subnormal $R$-series, where $G_{i}/G_{i + 1}$ is an $R$-module for $i = 1, \ 2, \ \ldots, \ n,$ then the \emph{$R$-torsion-free rank} of $G$ is
\[
\sum_{i = 1}^{n} dim_{K}(K \otimes_{R} G_{i}/G_{i + 1}).
\]
\end{defn}

\vspace{.1in}

It follows from Theorem~\ref{t:MAX} that each quotient $G_{i}/G_{i + 1}$ is a finitely generated $R$-module of finite rank. Thus, the above sum is well-defined and $G$ has finite $R$-torsion-free rank. An application of the Jordan-H\"{o}lder theorem shows that the $R$-torsion-free rank of a \nilR \ is independent of the subnormal $R$-series chosen \cite{warfield}.

\medskip

The next theorem and corollary are extensions of known facts about the rank of a finitely generated $R$-module over a PID.

\begin{thm}\label{t:R-TorsionFreeRank}
Let $R$ be a PID, and suppose $G$ is a finitely $R$-generated \nilR \ with $R$-torsion-free rank $n.$

\vspace{.1in}

\noindent (i) \ If $H \leq_{R} G,$ then the $R$-torsion-free rank of $H$ is at most $n.$

\vspace{.1in}

\noindent (ii) \ If $N \unlhd_{R} G$ and $N$ has $R$-torsion-free rank $m,$  then $G/N$ has $R$-torsion-free rank $n - m.$
\end{thm}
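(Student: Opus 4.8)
The plan is to reduce the statement to a purely module-theoretic fact about finitely generated modules over a PID by choosing a subnormal $R$-series adapted to $N$, and then to invoke the Jordan–H\"older-type invariance already cited in the excerpt. First I would use Theorem~\ref{t:MAX} to know that $N$ is finitely $R$-generated, so that both $G$ and $N$ have well-defined finite $R$-torsion-free rank. Next I would build a subnormal $R$-series for $G$ that passes through $N$: take any subnormal $R$-series for $N$ with $R$-module factors,
\[
N = N_{1} \unrhd_{R} N_{2} \unrhd_{R} \cdots \unrhd_{R} N_{k+1} = 1,
\]
and a subnormal $R$-series for $G/N$ with $R$-module factors, then lift the latter to a series from $G$ down to $N$ and splice the two together. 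The key point is that such series exist with abelian (hence $R$-module) factors because $G$ is nilpotent as an $R$-group, so one can refine, e.g., the lower central $R$-series; and all the intermediate subgroups are normal $R$-subgroups because $N \unlhd_R G$ and the isomorphism theorems for $R$-groups hold.

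With such a spliced series in hand, the $R$-torsion-free rank of $G$ is the sum of $\dim_{K}(K \otimes_{R} -)$ over all the factors, the rank of $N$ is the sum over the factors $N_i/N_{i+1}$, and the rank of $G/N$ is the sum over the remaining factors (those coming from the lifted $G/N$ series). By Definition~\ref{d:R-TorsionFreeRank} and the cited fact that the $R$-torsion-free rank is independent of the chosen subnormal $R$-series, these three sums are related by: (rank of $G$) $=$ (rank of $N$) $+$ (rank of $G/N$), which is exactly part (ii). Part (i) follows from (ii) together with a short argument: given $H \leq_{R} G$, the $R$-isolator $I(H,G)$ is a normal $R$-subgroup (I would want $H$ itself inside a normal one — more carefully, first reduce to the case $G$ $R$-torsion-free, or argue directly that any $R$-subgroup of $G$ has rank bounded by the rank of $G$ because a maximal $R$-linearly independent subset of $H$, read off factor by factor, remains $R$-linearly independent in $G$). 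Alternatively one shows the torsion-free rank does not increase when passing to an $R$-subgroup by the same splicing trick applied to a series of $G$ refined through $H$ where possible, together with the module fact that a submodule of a finitely generated module over a PID has rank at most that of the ambient module.

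The main obstacle I anticipate is the splicing step: one must verify that a subnormal $R$-series of $G$ with $R$-module factors passing through a prescribed $N \unlhd_R G$ actually exists, and that the $R$-action is compatible throughout — i.e., each term is genuinely an $R$-subgroup, each factor is genuinely an $R$-module under the induced action, and the lift of the $G/N$-series consists of $R$-subgroups of $G$ containing $N$. This is where the axioms of a \nilR{} (especially the Hall–Petresco axiom, which is what makes $HN \leq_R G$ and makes central-type factors into $R$-modules) and the $R$-group isomorphism theorems do the real work. Once the series is constructed, everything else is bookkeeping with $\dim_{K}(K \otimes_{R} -)$, which is additive on short exact sequences of finitely generated $R$-modules over the PID $R$; there are no serious computations, only the invariance statement already quoted from \cite{warfield}. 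For part (i), the one subtlety is that $H$ need not be normal, so I would phrase the argument in terms of choosing an $R$-linearly independent set in $H$ of size equal to its rank and tracking it through a subnormal $R$-series of $G$, concluding it stays independent there; this avoids needing $H$ to sit inside the series.
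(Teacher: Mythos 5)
Your proposal follows essentially the same route as the paper: part (ii) is proved there by exactly your splicing argument (a subnormal $R$-series of $N$ spliced with a lifted series of $G/N$, then the cited invariance of the rank under change of series), and the workable version of your part (i) — intersecting $H$ with a central series and bounding each factor — is precisely the paper's argument, which sets $H_i = H \cap \gamma_i G$ for the lower central series and uses the second isomorphism theorem to embed $H_i/H_{i+1}$ as an $R$-submodule of $\gamma_i G/\gamma_{i+1}G$, whose rank over the PID is at most that of the ambient module. The only adjustments to make are to drop the idea of deducing (i) from (ii) via $I(H,G)$ (as you note, $H$ need not be normal) and to phrase (i) through the intersected series rather than through ``$R$-linearly independent subsets of $H$,'' since for a nonabelian $R$-group the rank is defined via subnormal series, not via independent sets.
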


\begin{proof}
(i) \ Suppose $G$ has nilpotency class $c,$ and let
\[
G = \gg_{1}G \geq \gg_{2}G \geq \cdots \geq \gg_{c + 1}G = 1
\]
be the lower central series of $G.$ Assume that $\gg_{i}G/\gg_{i + 1}G$ has $R$-torsion-free rank $n_{i}$ for $i = 1, \ 2, \ \ldots, \ c.$ Since $G$ has $R$-torsion-free rank $n,$ we have $n_{1} + \cdots + n_{c} = n.$ Put $H_{i} = H \cap \gg_{i}G$ for $i = 1, \ 2, \ \ldots, \ c.$ Then
\[
H = H_{1} \geq_{R} H_{2} \geq_{R} \cdots \geq_{R} H_{c} \geq_{R} H_{c + 1} = 1
\]
is a central $R$-series for $H.$ Now,
\[
\frac{H_{i}}{H_{i + 1}} = \frac{H \cap \gg_{i}G}{H \cap \gg_{i + 1}G} = \frac{H \cap \gg_{i}G}{(H \cap \gg_{i}G) \cap \gg_{i + 1}G} \cong_{R} \frac{\gg_{i + 1}G (H \cap \gg_{i}G)}{\gg_{i + 1}G}
\]
for $i = 1, \ 2, \ \ldots, \ c.$ Therefore, $H_{i}/H_{i + 1}$ is $R$-isomorphic to an $R$-subgroup of $\gg_{i}G/\gg_{i + 1}G.$ Since $\gg_{i}G/\gg_{i + 1}G$ is an $R$-module, the $R$-torsion-free rank of $H_{i}/H_{i + 1}$ is at most $n_{i}.$ Hence, the $R$-torsion-free rank of $H$ is at most $n.$

\vspace{.1in}

\noindent (ii) \ Let
\[
1 = N_{0} \unlhd_{R} N_{1} \unlhd_{R} \cdots \unlhd_{R} N_{k} = N
\]
and
\[
1 = G_{0}/N \unlhd_{R} G_{1}/N \unlhd_{R} \cdots \unlhd_{R} G_{l}/N = G/N
\]
be subnormal $R$-series of $N$ and $G/N$ respectively. Clearly, $G$ has a subnormal $R$-series
\[
1 = N_{0} \unlhd_{R} N_{1} \unlhd_{R} \cdots \unlhd_{R} N_{k} \unlhd_{R} G_{1} \unlhd_{R} \cdots \unlhd_{R} G_{l} = G.
\]
Since $(G_{i + 1}/N)/(G_{i}/N) \cong_{R} G_{i + 1}/G_{i}$ for $i = 0, \ 1, \ \ldots, \ l - 1,$ the $R$-torsion-free rank of $G$ is the sum of the $R$-torsion-free ranks of $N$ and $G/N.$
\end{proof}

The next corollary follows immediately from Theorem~\ref{t:R-TorsionFreeRank} (ii).

\begin{cor}\label{c:R-TorsionFreeRank}
Let $R$ be a PID, and let $G$ be a finitely $R$-generated $R$-torsion-free \nilR. If $N \unlhd_{R} G,$ then $G$ and $N$ have the same $R$-torsion-free rank if and only if $G/N$ is an $R$-torsion group.
\end{cor}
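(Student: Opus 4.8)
The plan is to derive the corollary from Theorem~\ref{t:R-TorsionFreeRank}(ii) together with one auxiliary fact: a finitely $R$-generated \nilR \ (over a PID, hence noetherian, $R$) has $R$-torsion-free rank $0$ if and only if it is an $R$-torsion group. Granting this, the corollary is immediate. Note first that $G/N$ is finitely $R$-generated, being a quotient of $G.$ If $N$ has the same $R$-torsion-free rank as $G,$ say $n,$ then Theorem~\ref{t:R-TorsionFreeRank}(ii) gives that $G/N$ has $R$-torsion-free rank $n - n = 0,$ whence $G/N$ is an $R$-torsion group by the auxiliary fact. Conversely, if $G/N$ is an $R$-torsion group, then its $R$-torsion-free rank is $0,$ and Theorem~\ref{t:R-TorsionFreeRank}(ii) forces the $R$-torsion-free rank of $N$ to coincide with that of $G.$

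To prove the auxiliary fact, let $Q$ be a finitely $R$-generated \nilR \ and fix a subnormal $R$-series
\[
Q = Q_{1} \unrhd_{R} Q_{2} \unrhd_{R} \cdots \unrhd_{R} Q_{n + 1} = 1
\]
with each $Q_{i}/Q_{i + 1}$ an $R$-module, as in Definition~\ref{d:R-TorsionFreeRank}. By Theorem~\ref{t:MAX} every $Q_{i}$ is finitely $R$-generated, so every quotient $Q_{i}/Q_{i + 1}$ is a finitely generated $R$-module. Suppose $Q$ is an $R$-torsion group. Then each $Q_{i}$ is an $R$-torsion group (being an $R$-subgroup of $Q$), and from the induced $R$-action one sees that each $Q_{i}/Q_{i + 1}$ is a torsion $R$-module. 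Tensoring a torsion module over the PID $R$ with its fraction field yields $0,$ so every summand in Definition~\ref{d:R-TorsionFreeRank} vanishes and $Q$ has $R$-torsion-free rank $0.$

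For the converse, suppose $Q$ has $R$-torsion-free rank $0.$ Then each $Q_{i}/Q_{i + 1}$ has $R$-torsion-free rank $0,$ hence is a torsion $R$-module; being finitely generated, it is therefore a finitely $R$-generated $R$-torsion group, i.e. a \nilR \ of finite type (Definition~\ref{d:finite0-type}). Starting from $Q_{n + 1} = 1$ and applying Theorem~\ref{t:ShortExactSequence} to each short exact sequence $1 \rightarrow Q_{i + 1} \rightarrow Q_{i} \rightarrow Q_{i}/Q_{i + 1} \rightarrow 1,$ we conclude by downward induction on $i$ that every $Q_{i}$ is of finite type; in particular $Q = Q_{1}$ is of finite type, hence an $R$-torsion group.

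The only step that is not purely formal is this last induction: one must verify that Theorem~\ref{t:ShortExactSequence} genuinely applies to the terms of the chosen subnormal $R$-series (it does, since $R$ is noetherian and $Q_{i + 1} \unlhd_{R} Q_{i}$ for each $i$), and that a finitely generated torsion $R$-module is indeed an $R$-torsion group of finite type so that Definition~\ref{d:finite0-type} is applicable. Everything else is direct bookkeeping with ranks via Theorem~\ref{t:R-TorsionFreeRank}(ii).
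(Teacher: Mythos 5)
Your proof is correct and follows the same route as the paper, which simply states that the corollary is immediate from Theorem~\ref{t:R-TorsionFreeRank}(ii); the auxiliary fact you verify (that a finitely $R$-generated \nilR \ has $R$-torsion-free rank $0$ exactly when it is an $R$-torsion group) is precisely the step the paper leaves implicit, and your argument for it, via torsion quotients in the subnormal series and Theorem~\ref{t:ShortExactSequence}, is sound.
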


\begin{cor} \label{c:Abelian}
Let $R$ be a PID, and let $G$ be a finitely $R$-generated $R$-torsion-free \nilR. If the $R$-torsion-free rank of $G$ is 1, then $G$ is an $R$-module.
\end{cor}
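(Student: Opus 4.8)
The plan is to argue that an $R$-torsion-free \nilR{} of $R$-torsion-free rank $1$ must be abelian, and then invoke the remark at the start of Section~2 that every abelian $R$-group is an $R$-module. So the real content is showing $G$ is abelian. I would proceed by contradiction: suppose $G$ is non-abelian, so its nilpotency class $c$ is at least $2$. Consider the lower central series $G = \gg_{1}G \geq \gg_{2}G \geq \cdots \geq \gg_{c+1}G = 1$, and let $n_{i}$ be the $R$-torsion-free rank of $\gg_{i}G/\gg_{i+1}G$. Since ranks add along a central $R$-series (this is how $R$-torsion-free rank is defined, and is independent of the series by the Jordan--H\"older argument cited after Definition~\ref{d:R-TorsionFreeRank}), we have $n_{1} + n_{2} + \cdots + n_{c} = 1$. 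As each $n_{i}$ is a non-negative integer, exactly one of them equals $1$ and the rest vanish.

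The key step is to rule out $n_{i} = 0$ for the top quotient, and more generally to show that having a trivial-rank quotient forces $R$-torsion, contradicting $R$-torsion-freeness. First I would observe that $\gg_{c}G$ is a non-trivial $R$-subgroup (since $c$ is the class), and it is central, hence an $R$-module; so $\gg_{c}G = \gg_{c}G/\gg_{c+1}G$ has $R$-torsion-free rank $n_{c} \geq 1$ unless $\gg_{c}G$ is an $R$-torsion group. But $G$ is $R$-torsion-free, so $\gg_{c}G$ is $R$-torsion-free and non-trivial, whence $n_{c} \geq 1$. Combined with $n_{1} + \cdots + n_{c} = 1$, this forces $n_{1} = 0$, i.e.\ $G/\gg_{2}G$ has $R$-torsion-free rank $0$. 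Now $G/\gg_{2}G$ is an abelian $R$-group, hence an $R$-module, of rank $0$, so it is an $R$-torsion module; that is, $G/\gg_{2}G$ is an $R$-torsion group. By Corollary~\ref{c:R-TorsionFreeRank} applied with $N = \gg_{2}G$, this means $G$ and $\gg_{2}G$ have the same $R$-torsion-free rank, namely $1$. Then apply part (i) of Theorem~\ref{t:R-TorsionFreeRank}, or iterate: $\gg_{2}G$ is again a finitely $R$-generated $R$-torsion-free \nilR{} of rank $1$, of strictly smaller nilpotency class, and $\gg_{2}G/\gg_{3}G$ must be $R$-torsion (rank $0$ again), so $\gg_{3}G$ also has rank $1$. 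Continuing down the series, $\gg_{c}G$ has $R$-torsion-free rank $1$, which is consistent, but $\gg_{c+1}G = 1$ has rank $0$ while $\gg_{c}G/\gg_{c+1}G = \gg_{c}G$ would have to be $R$-torsion --- contradicting that $\gg_{c}G$ is non-trivial and $R$-torsion-free. Hence $c = 1$ and $G$ is abelian.

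The main obstacle I anticipate is handling the bookkeeping of ranks cleanly: one must be careful that the ``rank adds along central $R$-series'' fact applies to the lower central series (whose factors are $R$-modules, since they are abelian $R$-groups, so this is fine), and that Corollary~\ref{c:R-TorsionFreeRank} is being applied to genuinely $R$-torsion-free groups at each stage --- $R$-subgroups of $R$-torsion-free \nilR s are $R$-torsion-free, which is immediate. A slicker alternative, avoiding the descent, is: from $n_{1} = \cdots = n_{c-1} = 0$ and $n_{c} = 1$ one gets that $G/\gg_{c}G$ has $R$-torsion-free rank $0$, hence is $R$-torsion; but $G/\gg_{c}G$ contains $\gg_{c-1}G/\gg_{c}G$, which is a non-trivial central $R$-subgroup of the $R$-torsion-free group... here one needs $G/\gg_c G$ $R$-torsion-free, which is \emph{not} automatic. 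So I would favor the descent argument, or better still, directly: if $c \geq 2$ then $Z(G) \supseteq \gg_c G \neq 1$ and $Z(G)$ is $R$-isolated by Corollary~\ref{c:UniqueRoots}; pick $1 \neq z \in Z(G)$, and pick any $g \in G$ --- then $\{g, z\}$... ultimately the rank argument above is the most robust route, so that is what I would write up.
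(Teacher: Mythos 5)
Your rank bookkeeping is sound up to the point where you conclude $n_{1} = \cdots = n_{c-1} = 0$ and $n_{c} = 1$, and the descent via Corollary~\ref{c:R-TorsionFreeRank} does correctly give that $\gg_{i}G$ has $R$-torsion-free rank $1$ for every $i \leq c$. But at that point the argument stops producing information: all of these conclusions are mutually consistent, and your final sentence, which asserts that $\gg_{c}G = \gg_{c}G/\gg_{c+1}G$ ``would have to be $R$-torsion,'' is unjustified. Each descent step at level $i$ is licensed by $n_{i} = 0$, and at level $c$ you have already proved $n_{c} = 1$; so nothing forces $\gg_{c}G$ to be $R$-torsion, and the claimed contradiction evaporates --- in effect you are using $n_{c} = 0$ and $n_{c} = 1$ simultaneously. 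The deeper issue is that rank additivity together with Corollary~\ref{c:R-TorsionFreeRank} cannot by themselves rule out the configuration $n_{1} = 0$: to do that one needs a structural input, e.g.\ the standard commutator/bilinearity fact that if $G/\gg_{2}G$ is $R$-torsion then every factor $\gg_{i}G/\gg_{i+1}G$ is $R$-torsion and hence so is $G$ --- a fact you neither prove nor cite.

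The fix is the route you sketched and then abandoned in your last sentence, which is the paper's proof: since $G$ is non-trivial and $R$-torsion-free, $Z(G)$ is a non-trivial $R$-torsion-free $R$-subgroup, so its rank is positive and, by Theorem~\ref{t:R-TorsionFreeRank}~(i), equal to $1$; Corollary~\ref{c:R-TorsionFreeRank} then says $G/Z(G)$ is an $R$-torsion group; but $Z(G)$ is $R$-isolated by Corollary~\ref{c:UniqueRoots}, so $G/Z(G)$ is $R$-torsion-free by Lemma~\ref{l:R-IsolatedAndR-TFree}, hence trivial. Thus $G = Z(G)$ is abelian and therefore an $R$-module. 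The ingredient you were missing is precisely the isolation of the center, which upgrades ``torsion quotient'' to ``trivial quotient'' and replaces the inconclusive descent along the lower central series.
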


\begin{proof}
It suffices to show that $G$ is abelian. Since $Z(G)$ is $R$-torsion-free, its $R$-torsion-free rank equals some $m > 0.$ By Theorem~\ref{t:R-TorsionFreeRank} (i), $m$ must equal $1.$ Hence, $G/Z(G)$ is an $R$-torsion group by Corollary~\ref{c:R-TorsionFreeRank}. Now, $Z(G)$ is $R$-isolated by Corollary~\ref{c:UniqueRoots}. By Lemma~\ref{l:R-IsolatedAndR-TFree}, $G/Z(G)$ must be trivial. Thus, $G$ is abelian.
\end{proof}

\section{Proofs of Theorems}

We use the following notation: If $g$ and $h$ are (are not) conjugate group elements, then we write $g \sim h$ ($g \nsim h$).

\medskip

\noindent \underline{Proof of Theorem~\ref{t:Thm1}}: The proof is by induction on the $\Qx$-torsion-free rank $r$ of $G.$ If $r = 0,$ then $G$ is of finite type and there is nothing to prove.

Suppose that $r > 0.$ Let $g$ and $h$ be elements of $G$ which are conjugate in every factor $\Qx$-group of finite type. We claim that $g \sim h.$ Assume, on the contrary, that $g \nsim h.$ Since $r > 0,$ $G$ is not of finite type. By Lemma~\ref{l:Lemma2}, there exists $a \in Z(G)$ that is not a $\Qx$-torsion element. Let
\[
S = \{1, \ f \in \Qx \ | \ f \hbox{ is monic}\},
\]
and for each $\gs \in S,$ define
\[
H_{\gs} = gp_{\Qx}\left(a^{\gs} \right).
\]
For each $\gs \in S$, observe that $H_{\gs} \unlhd_{\Q[x]} G.$ By Theorem~\ref{t:R-TorsionFreeRank}, the $\Qx$-torsion-free rank of each $G/H_{\gs}$ is less than $r.$

Suppose that $gH_{\gs} \nsim hH_{\gs}$ in $G/H_{\gs}$ for some $\gs \in S.$ By induction and a $\Qx$-Isomorphism Theorem, there exists a normal $\Qx$-subgroup $N_{\gs}/H_{\gs}$ of $G/H_{\gs}$ such that
\[
\left(G/H_{\gs} \right) / \left( N_{\gs} / H_{\gs}\right) \cong_{\Qx} G/N_{\gs}
\]
is of finite type and the images of $g$ and $h$ under the natural $\Qx$-homomorphism from $G$ onto $G/N_{\gs}$ are not conjugate in $G/N_{\gs}.$ This contradicts the assumption that the images of $g$ and $h$ are conjugate in every factor $\Qx$-group of $G$ of finite type. Therefore, $gH_{\gs} \sim hH_{\gs}$ for all $\gs \in S.$ In particular, $gH_{1} \sim hH_{1}$ in $G/H_{1}.$ Thus, since $a \in Z(G),$ we can find a non-zero element $t \in \Qx$ such that $h \sim ga^{t}.$ Since $gH_{\gs} \sim hH_{\gs},$ we have
\[
gH_{\gs} \sim ga^{t}H_{\gs} \hbox{ \ for each \ } \gs \in S.
\]
Thus, for each $\gs \in S,$ we can find $y_{\gs} \in G$ and $u_{\gs} \in \Qx$ such that
\begin{equation}\label{e:ConjugacyProblem1}
y_{\gs}^{-1}gy_{\gs} = ga^{t} \left(a^{\gs}\right)^{u_{\gs}}.
\end{equation}
Let $Y$ be the set of those $y_{\gs} \in G$ arising in (\ref{e:ConjugacyProblem1}), and put $L = gp_{\Qx}(g, \ a, \ Y).$ Clearly, $\left[ y, \ g \right] \in gp_{\Qx}(a)$ for all $y \in Y$ by (\ref{e:ConjugacyProblem1}). Moreover, $gp_{\Qx}(a) \leq_{\Qx} Z(L)$ since $a \in Z(G).$ It follows from the commutator calculus and the axioms that $[l, \ g] \in gp_{\Qx}(a)$ for all $l \in L.$ Thus, we obtain a $\Qx$-homomorphism
\[
\gfi : L \rightarrow gp_{\Qx}(a) \hbox{ \ defined by \ } \gfi(l) = [l, \ g].
\]
Since $\Qx$ is a PID, $L/ ker \ \gfi$ is $\Qx$-cyclic. Thus, there exists an element $b \in L \setminus ker \ \gfi$ and $\ga \in \Qx$ such that
\begin{equation}\label{e:NewFormL}
L = gp_{\Qx}(ker \ \gfi, \ b) \hbox{ \ and \ } [b, \ g] = a^{\ga}.
\end{equation}
Note that $\ga \neq 0$ because $b \notin Ker \ \gfi.$ In fact, $b$ and $\ga$ can be chosen so that $\ga \in S.$

Next, we find all of the conjugates of $g$ in $L.$ By (\ref{e:NewFormL}), such a conjugate has the form $(b^{\gm}k)^{-1}g(b^{\gm}k),$ where $k \in ker \ \gfi$ and $\gm \in \Qx.$ Since $[g, \ b] = a^{-\ga} \in Z(L)$ and $ker \ \gfi$ is the centralizer of $g$ in $L,$
\begin{eqnarray*}
(b^{\gm}k)^{-1}g(b^{\gm}k) & = & k^{-1}b^{-\gm}gb^{\gm}k = k^{-1}g[g, \ b^{\gm}]k\\
                           & = & k^{-1}g[g, \ b]^{\gm}k = k^{-1}gk[g, \ b]^{\gm}\\
                           & = & ga^{-\ga\gm}
\end{eqnarray*}
by Proposition~\ref{p:CommuteCenter}. It follows that the set of conjugates of $g$ in $L$ is
\begin{equation}\label{e:ConjList}
C = \{ga^{\ga\gm} \ | \ \gm \in \Qx\}.
\end{equation}
Now, $g \nsim ga^{t}$ in $L$ because $g \nsim ga^{t}$ in $G.$ Thus $ga^{t} \notin C,$ and so, $\ga\gm \neq t$ for all $\gm \in \Qx.$ On the other hand, (\ref{e:ConjugacyProblem1}) yields
\[
y_{\ga}^{-1}gy_{\ga} = ga^{t}\left(a^{\ga}\right)^{u_{\ga}}
\]
because $\ga \in S.$ This means that $ga^{t}\left(a^{\ga}\right)^{u_{\ga}} \in C,$ so $t + \ga u_{\ga} = \ga \gl$ for some $\gl \in \Qx.$ Hence, $t = \ga(\gl - u_{\ga}),$ a contradiction since $\gl - u_{\ga} \in \Qx$. Theorem~\ref{t:Thm1} is proven.

\vspace{.25in}

\noindent \underline{Proof of Theorem~\ref{t:Thm2}}: Suppose that $G$ has nilpotency class $c,$ and assume that $G$ is not abelian. In this case, $c > 1.$ Thus, there exist elements $z_{2} \in \gz_{2}G \setminus Z(G)$ and $h \in G$ such that $[z_{2}, \ h] = z_{1}$ for some $1 \neq z_{1} \in Z(G).$ Let $\ga$ be a prime in $\Qx$ different from $\gp.$ Since $\Qx$ is noetherian and $G$ is finitely $\Qx$-generated, $Z(G)$ is also finitely $\Qx$-generated by Theorem~\ref{t:MAX}. Hence, $Z(G)$ is a finitely $\Qx$-generated $\Qx$-torsion-free abelian $\Qx$-group. By Lemma~\ref{l:NoInfiniteRoots}, there exists $n \in \N$ such that $z_{1}$ has no $\ga^{n}$th root in $Z(G).$ We claim that $z_{2}^{\ga^{n}} \nsim z_{2}^{\ga^{n}}z_{1}$ in $G,$ whereas $\gfi\left(z_{2}^{\ga^{n}}\right) \sim \gfi\left(z_{2}^{\ga^{n}}z_{1}\right)$ in $\gfi(G)$ for every $\Qx$-homomorphism $\gfi$ from $G$ onto a \nilQx \ of finite $\gp$-type. Once this is shown, the theorem will be proved.

First we show that $z_{2}^{\ga^{n}} \nsim z_{2}^{\ga^{n}}z_{1}$ in $G.$ If this were not the case, then there would exist $g \in G$ such that $g^{-1}z_{2}^{\ga^{n}}g = z_{2}^{\ga^{n}}z_{1}.$ Now, since $z_{2} \in \gz_{2}G \setminus Z(G),$ there exists a central element $z$ such that $[z_{2}, \ g] = z.$ Thus,
\[
z_{2}^{\ga^{n}}z_{1} = g^{-1}z_{2}^{\ga^{n}}g = (g^{-1}z_{2}g)^{\ga^{n}} = (z_{2}z)^{\ga^{n}} = z_{2}^{\ga^{n}}z^{\ga^{n}}.
\]
Therefore, $z_{1} = z^{\ga^{n}}.$ This contradicts the fact that $z_{1}$ has no $\ga^{n}$th root in $Z(G).$

Next we show that the images of $z_{2}^{\ga^{n}}$ and $z_{2}^{\ga^{n}}z_{1}$ are conjugate under every $\Qx$-homomorphism from $G$ onto an arbitrary \nilQx \ of finite $\gp$-type. Let $N \unlhd_{\Qx} G$ such that $G/N$ is of finite $\gp$-type and $(z_{1}N)^{\gp^{m}} = N$ for some nonnegative integer $m;$ that is, $z_{1}^{\gp^{m}} \in N.$ Since $\ga$ and $\gp$ are distinct primes, $\ga^{n}$ and $\gp^{m}$ are coprime. Thus, there exist $\gb, \ \gg \in \Qx$ such that $\gb \ga^{n} + \gg \gp^{m} = 1.$ Now, $[z_{2}, \ h] = z_{1}$ and $z_{1} \in Z(G)$ imply that $h^{-\gb}z_{2}h^{\gb} = z_{2}z_{1}^{\gb}$ by Proposition~\ref{p:CommuteCenter}. Therefore,
\begin{eqnarray*}
z_{2}^{\ga^{n}}z_{1} & = & z_{2}^{\ga^{n}}z_{1}^{\gb \ga^{n} + \gg \gp^{m}} = \left(z_{2}z_{1}^{\gb}\right)^{\ga^{n}}z_{1}^{\gg \gp^{m}}\\
                     & = & \left(h^{-\gb}z_{2}h^{\gb}\right)^{\ga^{n}}z_{1}^{\gg \gp^{m}} = h^{-\gb}z_{2}^{\ga^{n}}h^{\gb}\left(z_{1}^{\gp^{m}}\right)^{\gg}.
\end{eqnarray*}
Since $z_{1}^{\gp^{m}} \in N,$ we have $z_{2}^{\ga^{n}}z_{1}N = h^{-\gb}z_{2}^{\ga^{n}}h^{\gb}N$ in $G/N.$ And so, $z_{2}^{\ga^{n}}z_{1}N \sim z_{2}^{\ga^{n}}N$ in $G/N.$ This completes the proof of Theorem~\ref{t:Thm2}.

\vspace{0.25in}

\noindent \underline{Proof of Theorem~\ref{t:Thm3}}: This proof invokes the following lemma:

\begin{lem}\label{l:Lemma3}
Let $G$ be a finitely $\Qx$-generated $\Qx$-torsion-free \nilQx. Suppose that $Y \leq_{\Qx} Z(G),$ and let $H$ be a $\Qx$-isolated subgroup of $G.$ If $I(HY, \ G) = G,$ then $H \unlhd_{\Qx} G.$
\end{lem}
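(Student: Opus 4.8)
The plan is to pass from $G$ to its completion over the field $K=\mathbb{Q}(x)=\mathrm{Frac}(\Qx)$. The point is that in a $K$-powered group every $K$-subgroup is automatically $\Qx$-isolated, so the reduction one really wants — namely ``$HY=G$'' — becomes literally true after completing, and then it can be pushed back down to $G$.

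Concretely, I would first form $\widetilde{G}$, the $K$-completion of $G$ ($K$ is a binomial ring since $\mathbb{Q}\subseteq K$); this is a $K$-torsion-free nilpotent $K$-powered group with $\widetilde{G}=gp_{K}(G)$ and $G\leq_{\Qx}\widetilde{G}$. The elementary fact I would record is that every $K$-subgroup $M\leq_{K}\widetilde{G}$ is $\Qx$-isolated in $\widetilde{G}$: if $u^{\mu}\in M$ with $0\neq\mu\in K$, then $u=(u^{\mu})^{1/\mu}\in M$. I would then verify three translations. (a) Writing $\widetilde{H}:=I(H,\widetilde{G})$ for the $K$-isolator, $\widetilde{H}$ is a $K$-subgroup and $\widetilde{H}\cap G=H$; this is the only place the $\Qx$-isolatedness of $H$ is used, for if $g\in G$ and $g^{\mu}\in H$ with $\mu=p/q$, $p,q\in\Qx\setminus\{0\}$, then $g^{p}=(g^{\mu})^{q}\in H$ forces $g\in H$. (b) $Z(\widetilde{G})$ is a $K$-isolated $K$-subgroup (Corollary~\ref{c:UniqueRoots} applied to $\widetilde{G}$), and $Z(G)\subseteq Z(\widetilde{G})$: a centralizer in $\widetilde{G}$ is a $K$-subgroup, and the centralizer of any $z\in Z(G)$ already contains $G$, hence equals $\widetilde{G}$. (c) Consequently $Y\leq_{\Qx}Z(G)\subseteq Z(\widetilde{G})$.

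With these in place the argument is short. The product $\widetilde{H}\,Z(\widetilde{G})$ is a $K$-subgroup of $\widetilde{G}$, hence $\Qx$-isolated, so $I(\widetilde{H}\,Z(\widetilde{G}),\widetilde{G})=\widetilde{H}\,Z(\widetilde{G})$. For an arbitrary $g\in G$, the hypothesis $I(HY,G)=G$ provides $0\neq\lambda\in\Qx$ with $g^{\lambda}\in HY\subseteq\widetilde{H}\,Z(\widetilde{G})$, whence $g\in\widetilde{H}\,Z(\widetilde{G})$; therefore $G\subseteq\widetilde{H}\,Z(\widetilde{G})$ and so $\widetilde{G}=gp_{K}(G)\subseteq\widetilde{H}\,Z(\widetilde{G})$, i.e.\ $\widetilde{G}=\widetilde{H}\,Z(\widetilde{G})$. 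Now $\widetilde{H}\unlhd\widetilde{G}$ for the trivial reason that each element of $\widetilde{G}$ has the form $\widetilde{h}z$ with $\widetilde{h}\in\widetilde{H}$ and $z\in Z(\widetilde{G})$, and conjugation by $\widetilde{h}z$ acts on $\widetilde{H}$ as conjugation by $\widetilde{h}\in\widetilde{H}$. Finally, $H=\widetilde{H}\cap G$ is the intersection of a normal subgroup of $\widetilde{G}$ with the subgroup $G$, hence $H\unlhd G$; as $H$ is a $\Qx$-subgroup, $H\unlhd_{\Qx}G$.

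All of this is routine once the completion is set up; the step I expect to be the main obstacle is justifying the passage to $\widetilde{G}$ — that the $K$-completion exists with $G$ embedded, that $K$-subgroups there are $\Qx$-isolated, and above all the identity $H=\widetilde{H}\cap G$, which is precisely where ``$H$ is $\Qx$-isolated'' is consumed. A completion-free route would instead induct on the nilpotency class (or the $\Qx$-torsion-free rank) of $G$, quotienting by an isolated central $\Qx$-subgroup lying inside $H$, such as $I(H\cap\gamma_{c}G,\,G)$ (which is contained in $H$ since $H$ is isolated); but there the obstruction is that the image of $HY$ need not be isolated in the quotient — exactly the difficulty that completing over $K$ removes.
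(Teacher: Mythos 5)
Your argument is a genuinely different route from the paper's, and in outline it works. The paper never leaves $G$: it inducts on the $\Qx$-torsion-free rank, first showing $Z(H) \leq_{\Qx} Z(G)$ (every $g \in G$ has a nonzero power in $HY \leq_{\Qx} HZ(G)$, which centralizes $Z(H)$, and centralizers are $\Qx$-isolated), then passing to $G/Z(H)$, which is torsion-free of smaller rank by Lemma~\ref{l:R-IsolatedAndR-TFree}, Theorem~\ref{t:R-TorsionFreeRank} and Corollary~\ref{c:R-TorsionFreeRank}; the hypothesis survives the quotient trivially, since $g^{\beta} \in HY$ gives $(gZ(H))^{\beta} \in HYZ(H)/Z(H)$ --- so the obstruction you anticipated for a completion-free induction (non-isolation of the image of $HY$) never actually arises: one only needs the isolator of the image to be the whole quotient, not the image to be isolated. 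Induction then gives $H/Z(H) \unlhd_{\Qx} G/Z(H)$, hence $H \unlhd_{\Qx} G$. Your completion argument instead buys a non-inductive, one-step proof: once $\widetilde{G} = \widetilde{H}\,Z(\widetilde{G})$, normality of $\widetilde{H}$ is immediate, and intersecting back with $G$ finishes.

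Two caveats. The substantial one, which you flag yourself, is the unproved foundation: the existence of the $\Q(x)$-completion of a finitely $\Qx$-generated $\Qx$-torsion-free \nilQx, with $G \leq_{\Qx} \widetilde{G}$ and $\widetilde{G} = gp_{K}(G)$. Nothing in this paper provides that; it can be assembled from the authors' localization paper or from a Mal'cev-basis construction over the PID $\Qx$, but as written it is exactly the machinery the paper's rank induction is designed to avoid, so it must be cited or built before your proof stands. The smaller issue is step (a): you treat membership in $\widetilde{H} = gp_{K}(H)$ as synonymous with having a nonzero $K$-power in $H$, but Theorem~\ref{t:R-Isolator} describes the isolator only of an $R$-subgroup for the matching ring $R$, and a priori an element of $gp_{K}(H)$ need not have any power in $H$. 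This is repairable inside your framework: view $\widetilde{G}$ as a \nilQx\ by restriction of scalars and set $D = \{u \in \widetilde{G} \mid u^{\lambda} \in H \hbox{ for some } 0 \neq \lambda \in \Qx\}$, a $\Qx$-subgroup by Theorem~\ref{t:R-Isolator}; the computation $(u^{p/q})^{\lambda q} = (u^{\lambda})^{p} \in H$ shows $D$ is closed under $K$-exponents, so $D$ is a $K$-subgroup containing $H$, while $u = (u^{\lambda})^{1/\lambda}$ shows $D$ lies in every $K$-subgroup containing $H$; hence $gp_{K}(H) = D$, and $\widetilde{H} \cap G = H$ then follows from the $\Qx$-isolation of $H$ in $G$ exactly as you say. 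With that lemma supplied and the completion properly justified, your proof is correct; the paper's version has the advantage of using only tools already established in the paper.
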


\begin{proof}
The proof is by induction on the $\Qx$-torsion-free rank $r$ of $G$. If $r = 0,$ then $G$ is a $\Qx$-torsion group, contrary to assumption. If $r = 1$, then $G$ is a $\Qx$-module by Corollary~\ref{c:Abelian} and the result is trivial. Suppose then that $r > 1.$ We first prove that $Z(H) \leq_{\Qx} Z(G).$ By Theorem~\ref{t:UniqueRoots}, every element of $G$ has at most one $\ga$th root for every $0 \neq \ga \in \Qx.$ It follows that the centralizer of every non-empty subset of $G$ is $\Qx$-isolated in $G$ (see Lemma 4.8 in \cite{majewicz_and_zyman-2012}). In particular, $Z(H)$ is $\Qx$-isolated in $G$ because $Z(H) = H \cap C_{H}(G),$ where $C_{H}(G)$ denotes the centralizer of $H$ in $G,$ and $H$ is $\Qx$-isolated in $G$ by hypothesis. We will use these observations freely in the remainder of the proof.

Pick any element $g \in G = I(HY, \ G).$ By Theorem~\ref{t:R-Isolator}, there exists $0 \neq \ga \in \Qx$ such that $g^{\ga} \in HY.$ We assert that $g^{\ga}$ centralizes $Z(H).$ Suppose that $h \in Z(H).$ Since $g^{\ga} \in HY \leq_{\Qx} HZ(G),$ there exist elements $k \in H$ and $z \in Z(G)$ such that $g^{\ga} = kz.$ Now, $[h, \ k] = 1$ because $h \in Z(H),$ and so $[h, \ g^{\ga}] = 1$ because $z \in Z(G).$ Thus, $g^{\ga}$ centralizes $Z(H)$ as asserted. Therefore, $g$ also centralizes $Z(H)$ because the centralizer of $Z(H)$ is $\Qx$-isolated in $G.$ Since $g$ is an arbitrary element of $G,$ we have that $Z(H) \leq_{\Qx} Z(G)$ as claimed.

The fact that $Z(H) \leq_{\Qx} Z(G)$ immediately gives $Z(H) \unlhd_{\Qx} G.$ Clearly,  $G/Z(H)$ is finitely $\Qx$-generated. It is also $\Qx$-torsion-free by Lemma~\ref{l:R-IsolatedAndR-TFree}. Since $G$ has $\Qx$-torsion-free rank $r,$ $G/Z(H)$ has $\Qx$-torsion-free rank less than $r$ by Theorem~\ref{t:R-TorsionFreeRank} (i) and Corollary~\ref{c:R-TorsionFreeRank}. We check that the remaining hypotheses of the lemma hold for $G/Z(H).$ Indeed,
$YZ(H)/Z(H) \leq_{\Qx} Z(G/Z(H))$ since $Y \leq_{\Qx} Z(G).$ Moreover, since $H$ is $\Qx$-isolated in $G$, it is easy to see that $H/Z(H)$ must be $\Qx$-isolated in $G/Z(H).$ It remains to show that $G/Z(H) \leq_{\Qx} I(HY/Z(H), \ G/Z(H))$ (the reverse inclusion is trivially true). We begin by noting that $Z(H) \unlhd_{\Qx} HY$ since $Y \leq_{\Qx} Z(G).$ Suppose $gZ(H) \in G/Z(H)$ for some $g \in G.$ Since $G = I(HY, \ G),$ $g^{\gb} \in HY$ for some $0 \neq \gb \in \Qx.$ Thus, $g^{\gb}Z(H) \in HY/Z(H);$ that is, $(gZ(H))^{\gb} \in HY/Z(H).$ And so, $gZ(H) \in I(HY/Z(H), \ G/Z(H))$ as required. By induction, $H/Z(H) \unlhd_{\Qx} G/Z(H),$ and consequently, $H \unlhd_{\Qx} G.$
\end{proof}

We now prove Theorem~\ref{t:Thm3} by induction on the nilpotency class $c$ of $G.$ If $c = 1$, then $G$ is a free $\Qx$-module of finite rank. By Lemma~\ref{l:R-IsolatedAndR-TFree}, $G/H$ is $\Qx$-torsion-free, and thus, a free $\Qx$-module as well. Hence, $G = H \times K$ for some $K \leq_{\Qx} G.$

We claim that
\[
G^{{\gp}^{i}}H = H \times K^{{\gp}^{i}} \hbox{ \, \, \, \, }(i = 1, \ 2, \ \ldots).
\]
To begin with, observe that $HK^{\gp^{i}} = H \times K^{\gp^{i}}$ because $H \cap K^{\gp^{i}} = 1.$ We show that $H K^{\gp^{i}} = G^{\gp^{i}}H.$ Clearly, $H K^{\gp^{i}} \leq_{\Qx} G^{\gp^{i}}H.$ We assert that $G^{\gp^{i}}H \leq_{\Qx} HK^{\gp^{i}}.$ If $g^{\gp^{i}}h$ is a generator of $G^{\gp^{i}}H,$ then
\[
g^{\gp^{i}}h = \left(h_{1}k_{1}\right)^{\gp^{i}}h = h_{1}^{\gp^{i}}hk_{1}^{\gp^{i}}
\]
for some $h_{1} \in H$ and $k_{1} \in K.$ Thus, $g^{\gp^{i}}h \in HK^{\gp^{i}},$ proving the assertion and the claim.

Now, $K$ is finitely $\Qx$-generated by Theorem~\ref{t:MAX}, as well as $\Qx$-torsion-free.
Thus, $\bigcap_{i = 1}^{\infty}K^{{\gp}^{i}} = 1$ by Corollary 5.3 in \cite{majewicz_and_zyman-2012}. And so,
\[
\bigcap_{i = 1}^{\infty} G^{{\gp}^{i}} H = \bigcap_{i = 1}^{\infty}\left(H \times K^{{\gp}^{i}}\right) = H \times \left(\bigcap_{i = 1}^{\infty} K^{{\gp}^{i}}\right) = H.
\]
This proves the theorem when $c = 1.$

Suppose that $c > 1.$ Put
\[
Z = Z(G), \ \ I = I(HZ, \ G), \textmd{\ and \ } L = \bigcap_{i = 1}^{\infty} G^{{\gp}^{i}}H.
\]
We need to show that $L = H.$ By Corollary 3.3 in \cite{majewicz_and_zyman-2012}, $G/Z$ is $\Qx$-torsion-free because $G$ is $\Qx$-torsion-free. Since $I$ is $\Qx$-isolated in $G,$ $I/Z$ is $\Qx$-isolated in $G/Z.$ By induction,
\begin{equation} \label{e:InductionIsolator}
\bigcap_{i = 1}^{\infty} \left(\frac{G}{Z}\right)^{{\gp}^{i}} \cdot \frac{I}{Z} = \frac{I}{Z}.
\end{equation}
We claim that $\bigcap_{i = 1}^{\infty} G^{{\gp}^{i}}I = I.$ Clearly, $\bigcap_{i = 1}^{\infty} G^{{\gp}^{i}}I \geq_{\Qx} I.$ Let $x \in \bigcap_{i = 1}^{\infty} G^{{\gp}^{i}}I.$ Then
\[
xZ \in \frac{G^{\gp^i}I}{Z} = \frac{G^{\gp^i}Z}{Z} \cdot  \frac{I}{Z} = \left(\frac{G}{Z} \right)^{\gp^i} \cdot \frac{I}{Z}
\]
for all $i \geq 1$. It follows from (\ref{e:InductionIsolator}) that $x \in I$, establishing our claim. Now, $\bigcap_{i = 1}^{\infty} G^{{\gp}^{i}}H \leq_{\Qx} I.$ It follows that $H \leq_{\Qx} L \leq_{\Qx} I.$ Now, $I$ is finitely $\Qx$-generated by Theorem~\ref{t:MAX} and $\Qx$-torsion-free, and $H$ is $\Qx$-isolated in $I.$ By Lemma~\ref{l:Lemma3}, $H \unlhd_{\Qx} I$. Moreover, $I/H$ is $\Qx$-torsion-free by Lemma~\ref{l:R-IsolatedAndR-TFree} because $H$ is $\Qx$-isolated in $I.$

We show that $L = H.$ Consider the factor $\Qx$-group $L/H.$ Let $\ell H \in L/H,$ and let $i$ be a positive integer. By Theorem~\ref{t:ProductOfPowers}, every element of $G^{{\gp}^{i}}$ is a $\gp^{i}$th power of an element of $G.$ Thus, there exist $h \in H$ and $g_{i} \in G$ such that
\[
\ell = g_{i}^{{\gp}^{i}}h.
\]
Hence, $g_{i}^{\gp^{i}} = \ell h^{-1} \in I.$ Since $I$ is $\Qx$-isolated in $G,$ $g_{i} \in I.$ This implies that $\ell H = (g_{i}H)^{\gp^{i}}$ with $g_{i}H \in I/H.$ It follows that $\ell H$ has a $\gp^{i}$th root in $I/H$ for every $i = 1, \ 2, \ \ldots.$ If $\ell H \neq H$, this would contradict Lemma \ref{l:NoInfiniteRoots}. Consequently, $\ell H = H,$ and thus, $L = H$ as desired. This finishes the proof of Theorem~\ref{t:Thm3}.

\vspace{.25in}

\noindent \underline{Proof of Theorem~\ref{t:Thm4}}: Let $c$ be the nilpotency class of $G.$ We can assume, without loss of generality, that $G$ is $\Qx$-torsion-free. To see why, put $T = \gt(G)$ and fix a prime $\gp \in \Qx.$ Since $T$ is a (normal) $\Qx$-subgroup of $G,$ it is finitely $\Qx$-generated by Theorem~\ref{t:MAX}, and thus, of finite type. Let $\gs \in \Qx$ be the exponent of $T,$ and assume that $(gT)^{\ga} \in HT/T$ for some $0 \neq \ga \in \Qx.$ There exist $h \in H$ and $t \in T$ such that $g^{\ga} = ht.$ 
We claim that there exist non-zero distinct elements $\gb$ and $\gm$ of $\Qx$ and $t' \in T$ such that $g^{\ga \gb} = h^{\gb}t'$ and $g^{\ga \gm} = h^{\gm}t'.$
Choose any $0 \neq \gb \in \Qx,$ and let $\gm = \gb + \gs.$ Put $\gt_{i} = \gt_{i}(h, \ t)$ for $i \in \N.$ Since $T \unlhd_{\Qx} G,$ $\gt_{i} \in T$ for $i \geq 2$ (see the proof of Theorem 4.16 in \cite{clement-majewicz-zyman} for details). By the Hall-Petresco Axiom,
\begin{eqnarray*}
g^{\ga \gb} & = & h^{\gb}t^{\gb}\gt_{c}^{- \binom{\gb}{c}} \cdots \gt_{2}^{- \binom{\gb}{2}}\\
            & = & h^{\gb}t'
\end{eqnarray*}
with $t' \in T$ and
\begin{eqnarray*}
g^{\ga \gm} & = & h^{\gm}t^{\gm}\gt_{c}^{- \binom{\gm}{c}} \cdots \gt_{2}^{- \binom{\gm}{2}}\\
            & = & h^{\gm}t^{\gb + \gs}\gt_{c}^{- \binom{\gb + \gs}{c}} \cdots \gt_{2}^{- \binom{\gb + \gs}{2}}.
\end{eqnarray*}
Since $T$ has exponent $\gs,$ $t^{\gb + \gs} = t^{\gb}$ and $\gt_{i}^{- \binom{\gb + \gs}{i}} = \gt_{i}^{- \binom{\gb}{i}}$ for $2 \leq i \leq c.$ This proves the claim.

Now, $\ga \gb - \ga \gm \neq 0$ and $g^{\ga \gb - \ga \gm} = h^{\gb - \gm} \in H,$ contrary to the hypothesis. This means that $(gT)^{\ga} \notin HT/T$ for any $0 \neq \ga \in \Qx.$ {Hence, $g^{\ga} \notin HT,$ and thus, $g \notin T.$ Consequently, we might as well assume that $G$ is $\Qx$-torsion-free as mentioned in the beginning of the proof.}

Since $I(H, \ G)$ is a $\Qx$-isolated subgroup of $G,$
\[
\bigcap_{i = 1}^{\infty} G^{{\gp}^{i}}I(H, \ G) = I(H, \ G)
\]
by Theorem~\ref{t:Thm3}. Now,
\[
g \notin \bigcap_{i = 1}^{\infty} G^{{\gp}^{i}}I(H, \ G)
\]
because $g \notin I(H, \ G)$ by hypothesis. Furthermore,
\[
\bigcap_{i = 1}^{\infty} G^{{\gp}^{i}}H \subseteq \bigcap_{i = 1}^{\infty} G^{{\gp}^{i}}I(H, \ G)
\]
since $H \leq_{\Qx} I(H, \ G).$ And so, there exists an integer $j$ such that $G/G^{\gp^{j}}$ is a \nilQx \ of finite $\gp$-type separating $g$ and $H.$

\end{document}